\theoremstyle{plain} 
\newtheorem{theorem}{\indent\sc Theorem}[section]
\newtheorem{lemma}[theorem]{\indent\sc Lemma}
\newtheorem{proposition}[theorem]{\indent\sc Proposition}
\theoremstyle{definition} 
\newtheorem{definition}[theorem]{\indent\sc Definition}
\newtheorem{remark}[theorem]{\indent\sc Remark}
\title{Generalized metallic structures}
\author{Adara M. Blaga and Antonella Nannicini}
\date{}
\begin{document}

\maketitle

\markboth{{\small\it {\hspace{4cm} Generalized metallic structures}}}{\small\it{Generalized metallic structures
\hspace{4cm}}}

\footnote{ 
2010 \textit{Mathematics Subject Classification}.
53C07, 53C15, 53C38, 53D18.
}
\footnote{ 
\textit{Key words and phrases}.
Metallic structures, generalized geometry, calibrated geometries.
}

\begin{abstract}
 We study the properties of a generalized metallic, a generalized product and a generalized complex structure induced on the generalized tangent bundle of $M$ by a metallic Riemannian structure $(J,g)$ on $M$, providing conditions for their integrability with respect to a suitable connection. Moreover, by using methods of generalized geometry, we lift $(J,g)$ to metallic Riemannian structures on the tangent and cotangent bundles of $M$, underlying the relations between them.
\end{abstract}

\bigskip

\section{Preliminaries}

On a smooth manifold $M$, besides the almost complex, almost tangent, almost product structures etc., some other polynomial structures can be considered as $C^{\infty}$-tensor fields $J$ of $(1,1)$-type which are roots of the algebraic equation
$$Q(J):=J^n+a_nJ^{n-1}+\dots+a_2J+a_1I=0,$$
where $I$ is the identity operator on the Lie algebra of vector fields on $M$. In particular, if $Q(J):=J^2-pJ-qI$, with $p$ and $q$ positive integers, its solution $J$ will be called \textit{metallic structure} \cite{a}. The name is motivated by the fact that the \textit{$(p,q)$-metallic number} introduced by Vera W. de Spinadel \cite{s} is precisely the positive root of the quadratic equation $x^2-px-q=0$, namely $\sigma_{p,q}:=\frac{\displaystyle p+\sqrt{p^2+4q}}{\displaystyle 2}$.
For example: if $p=q=1$ we get the \textit{golden number} $\sigma=\frac{\displaystyle 1+\sqrt{5}}{\displaystyle 2}$; if $p=2$ and $q=1$ we get the \textit{silver number} $\sigma_{2,1}=1+\sqrt{2}$; if $p=3$ and $q=1$ we get the \textit{bronze number} $\sigma_{3,1}=\frac{\displaystyle 3+\sqrt{13}}{\displaystyle 2}$; if $p=1$ and $q=2$ we get the \textit{copper number} $\sigma_{1,2}=2$; if $p=1$ and $q=3$ we get the \textit{nickel number} $\sigma_{1,3}=\frac{\displaystyle 1+\sqrt{13}}{\displaystyle 2}$ and so on.\\

We shall briefly recall the basic notions of metallic (Riemannian) geometry.

\begin{definition}\cite{c} \label{d1}
\textit{A metallic structure} $J$ on $M$ is an endomorphism $J:TM\rightarrow TM$ satisfying
\begin{equation}\label{0.1}
J^2=pJ+qI,
\end{equation}
for some $p$, $q\in\mathbb{N}^*$. The pair $\left(M, J\right) $ is called a {\it metallic manifold}. Moreover, if a Riemannian metric $g$ on $M$ is compatible with $J$, that is $g(JX, Y)=g(X, JY)$, for any $X$, $Y\in C^{\infty}(TM)$, we call the pair $(J,g)$ a {\it metallic Riemannian structure} and $(M, J,g)$ a {\it metallic Riemannian manifold}.
\end{definition}

The concept of integrability for a metallic structure is defined in the classical manner.

\begin{definition}\label{d3}
A metallic structure $J$ is called {\it integrable} if its Nijenhuis tensor field:
$$N_{J}(X, Y):=[JX, JY]-J[JX, Y]-J[X, JY] +J^{2}[X, Y]$$ vanishes for all $X,Y\in C^{\infty}(TM)$.
\end{definition}


It is known \cite{c} that an almost product structure $F$ on $M$ induces two metallic structures:
$$
J^{\pm}=\pm \frac{2\sigma _{p, q}-p}{2}F+\frac{p}{2}I
$$
and, conversely, every metallic structure $J$ on $M$ induces two almost product structures:
$$
F^{\pm }=\pm (\frac{2}{2\sigma _{p, q}-p}J-\frac{p}{2\sigma _{p, q}-p}I),
$$
where $\sigma _{p,q}=\frac{\displaystyle p+\sqrt{p^{2}+4q}}{\displaystyle 2}$ is the metallic number, for $p,q\in \mathbb{N}^*$.

In particular, if the almost product structure $F$ is compatible with a Riemannian metric $g$, then $(J^{+},g)$ and $(J^{-},g)$ are metallic Riemannian structures.\\

The analogue concept of locally product manifold is considered in the context of metallic geometry.

\begin{definition} \cite{b}
A metallic Riemannian manifold $(M,J,g)$ is called \textit{locally metallic} if $J$ is parallel with respect to the Levi-Civita connection $\nabla$ of $g$, that is $\nabla J=0$.
\end{definition}

In the followings, we shall extend the definition of a metallic structure for any $p$ and $q$ real numbers. In this way, we also include some other well-known structures; for instance, if $(p,q) \in \{ (0,-1), (0,0), (0,1), (1,0) \}$, the solution of (\ref{0.1}) would yield an almost complex, an almost tangent, an almost product and a $J(2,1)$-structure, respectively.

\section{Generalized structures induced by metallic structures}

Let $TM\oplus T^*M$ be the generalized tangent bundle of a smooth manifold $M$.

\begin{definition}\label{...}
\textit{A generalized metallic structure} $\hat{J}$ on $M$ is an endomorphism $\hat{J}:TM\oplus T^*M\rightarrow TM\oplus T^*M$ satisfying
$$\hat{J}^2=p\hat{J}+qI,$$
for some real numbers $p$ and $q$.
\end{definition}

For a linear connection $\nabla$ on $M$, we consider the bracket $[\cdot,\cdot]_{\nabla}$ on $C^{\infty}(TM\oplus T^*M)$ \cite{na}:
$$[X+\alpha,Y+\beta]_{\nabla}:=[X,Y]+\nabla_X\beta-\nabla_Y\alpha,$$
for all $X,Y\in C^{\infty}(TM)$ and $\alpha,\beta\in C^{\infty}(T^*M)$.

\begin{definition}\label{...}
A generalized metallic structure $\hat{J}$ is called {\it $\nabla$-integrable} if its Nijenhuis tensor field $N_{\hat{J}}^{\nabla}$ with respect to $\nabla$:
$$N_{\hat{J}}^{\nabla}(\sigma, \tau):=[\hat{J}\sigma,\hat{J}\tau]_{\nabla}-\hat{J}[\hat{J}\sigma, \tau]_{\nabla}-\hat{J}[\sigma, \hat{J}\tau]_{\nabla} +\hat{J}^{2}[\sigma, \tau]_{\nabla}$$ vanishes for all $\sigma,\tau \in C^{\infty}(TM\oplus T^*M)$.
\end{definition}

\subsection{Generalized metallic structure induced by $(J,g)$}

Let $(J,g)$ be a metallic Riemannian structure on $M$ such that $J^2=pJ+qI$, $p,q\in \mathbb{R}$. If we denote by $\sharp_g:T^*M\rightarrow TM$ the inverse of the isomorphism $\flat_g:TM\rightarrow T^*M$, $\flat_g(X):=i_Xg$, from the $g$-symmetry of $J$ we have $\sharp_g\circ J^*=J\circ \sharp_g$ and $\flat_g\circ J=J^*\circ \flat_g$, where $(J^*\alpha)(X):=\alpha(JX)$. Also notice that $J^*$ is a metallic structure, too, namely, $(J^*)^2=pJ^*+qI$, and we easily get that $\sharp_g\circ (J^*)^k=J^k\circ \sharp_g$ and $\flat_g\circ J^k=(J^*)^k\circ \flat_g$, for any $k\in \mathbb{N}$.

On $TM\oplus T^*M$ we consider the Riemannian metric:
\begin{equation}\label{e}
\hat{g}(X+\alpha,Y+\beta):=g(X,Y)+g(\sharp_g\alpha,\sharp_g\beta),
\end{equation}
for any $X,Y\in C^{\infty}(TM)$ and $\alpha,\beta\in C^{\infty}(T^*M)$.

\begin{definition}
A pair $(\hat{J},\hat{g})$ of a generalized metallic structure $\hat{J}$ and a Riemannian metric $\hat{g}$ such that $\hat{J}$ is $\hat{g}$-symmetric is called $\textit{generalized metallic Riemannian structure}$.
\end{definition}

Remark that the generalized metallic structure
$\hat{J}_m:=\begin{pmatrix}
               J & 0 \\
               0 & J^* \\
         \end{pmatrix}$
induced by the metallic Riemannian structure $(J,g)$ is $\hat{g}$-symmetric, hence, $(\hat{J}_m,\hat{g})$ is a generalized metallic Riemannian structure.

\begin{proposition}
The generalized metallic structure $\hat{J}_m$ induced by the metallic Riemannian structure $(J,g)$ on $M$ is $\nabla$-integrable if and only if $J$ is integrable and $(\nabla_{JX}J)=(\nabla_{X}J)J$, for any $X\in C^{\infty}(TM)$.
\end{proposition}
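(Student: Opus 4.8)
The plan is to evaluate $N^{\nabla}_{\hat{J}_m}$ on arbitrary sections $\sigma=X+\alpha$ and $\tau=Y+\beta$ of $TM\oplus T^*M$, expanding each of the four terms by means of the definition of $[\cdot,\cdot]_{\nabla}$ together with the block form of $\hat{J}_m$, namely $\hat{J}_m(X+\alpha)=JX+J^*\alpha$. Since $\hat{J}_m$ preserves the splitting $TM\oplus T^*M$, the result decomposes into a tangent component in $C^{\infty}(TM)$ and a cotangent component in $C^{\infty}(T^*M)$, and $N^{\nabla}_{\hat{J}_m}(\sigma,\tau)=0$ for all $\sigma,\tau$ if and only if both components vanish for all $X,Y,\alpha,\beta$. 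The tangent component collects exactly $[JX,JY]-J[JX,Y]-J[X,JY]+J^2[X,Y]$, which is $N_J(X,Y)$; hence its vanishing for all $X,Y$ is equivalent to the integrability of $J$.

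For the cotangent component I would group the terms according to whether they act on $\beta$ or on $\alpha$. Using the elementary identity $(\nabla_Z J^*)\gamma=\nabla_Z(J^*\gamma)-J^*(\nabla_Z\gamma)$, the four $\beta$-terms $\nabla_{JX}(J^*\beta)-J^*\nabla_{JX}\beta-J^*\nabla_X(J^*\beta)+(J^*)^2\nabla_X\beta$ collapse to $(\nabla_{JX}J^*)\beta-J^*(\nabla_X J^*)\beta$, and the $\alpha$-terms yield the same expression with $X,\beta$ replaced by $Y,\alpha$ and an overall sign. Thus the cotangent component is
\[
\big(\nabla_{JX}J^*-J^*\nabla_X J^*\big)\beta-\big(\nabla_{JY}J^*-J^*\nabla_Y J^*\big)\alpha,
\]
which, taking $Y=0$ and $\alpha=0$, vanishes for all $X,Y,\alpha,\beta$ precisely when $\nabla_{JX}J^*=J^*(\nabla_X J^*)$ for every $X$.

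It then remains to rewrite this condition on $J^*$ as the stated condition on $J$. Because $J^*$ is the transpose of $J$ and the induced connection on $T^*M$ is the dual connection, one has $\nabla_Z J^*=(\nabla_Z J)^*$, where $(\,\cdot\,)^*$ denotes the transpose $(1,1)$-tensor defined by evaluation; moreover transposition reverses composition, $(AB)^*=B^*A^*$. Consequently $J^*(\nabla_X J^*)=J^*(\nabla_X J)^*=\big((\nabla_X J)J\big)^*$ and $\nabla_{JX}J^*=(\nabla_{JX}J)^*$, so the $J^*$-condition reads $(\nabla_{JX}J)^*=\big((\nabla_X J)J\big)^*$; since transposition is a bijection this is equivalent to $\nabla_{JX}J=(\nabla_X J)J$. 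Combining the two components gives the claimed equivalence.

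The main obstacle is the careful bookkeeping of the cotangent terms and, above all, keeping the correct order of composition in the transpose step: it is exactly the order-reversal $(AB)^*=B^*A^*$ that produces $(\nabla_X J)J$ rather than $J(\nabla_X J)$, so this is where one must be most attentive. (One can alternatively transfer the condition through the musical isomorphisms using $\flat_g\circ J=J^*\circ\flat_g$, but then the non-commutativity of $J$ and $\nabla_X J$ forces one to track the $g$-adjoint, which reproduces the same order; the transpose route avoids the metric altogether.)
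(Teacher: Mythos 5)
Your proposal is correct and follows essentially the same route as the paper: decompose $N^{\nabla}_{\hat{J}_m}$ into its tangent part, which is $N_J$, and its cotangent part, which collapses to $(\nabla_{JX}J^*)-J^*(\nabla_X J^*)$ applied to the form arguments, and then translate the $J^*$-condition into $\nabla_{JX}J=(\nabla_X J)J$ via duality. The paper performs the same computation by evaluating separately on the pairs $(X,Y)$, $(X,\beta)$, $(\alpha,\beta)$, writing the final identification as $((\nabla_{JX}J^*)-J^*(\nabla_X J^*))(\beta)=\beta\bigl((\nabla_{JX}J)-(\nabla_X J)J\bigr)$, which is exactly your transpose step.
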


\begin{proof}  We have:\\

$N_{\hat{J}_m}^{\nabla}(X,Y)=[JX,JY]-J[JX, Y]-J[X, JY] +J^{2}[X,Y]=N_J(X,Y)$\\

$N_{\hat{J}_m}^{\nabla}(X, \beta)=[JX,J^*\beta]_{\nabla}-J^*[JX, \beta]_{\nabla}-J^*[X,J^*\beta]_{\nabla} +(J^*)^{2}[X, \beta]_{\nabla}$\\

$={\nabla}_{JX} J^{*} \beta-J^* \nabla_{JX} \beta-J^*\nabla_XJ^* \beta+(J^*)^2\nabla_X \beta$\\

$=((\nabla_{JX}J^*)-J^*(\nabla_X J^*))(\beta)$\\

$=\beta((\nabla_{JX}J)-(\nabla_X J)J)$\\

$N_{\hat{J}_m}^{\nabla}(\alpha, \beta)=0$,\\

\noindent for all $X,Y\in C^{\infty}(TM)$ and $\alpha,\beta\in C^{\infty}(T^*M)$. Then the proof is complete.
\end{proof}

Remark that if $\nabla$ is a \textit{$J$-connection}, that is $\nabla J=0$, then $\hat{J}_m$ is $\nabla$-integrable if and only if $J$ is integrable. Moreover, if  $T^{\nabla}$ is the torsion of ${\nabla}$, $T^{\nabla}(X,Y):={\nabla}_X Y-{\nabla}_Y X -[X,Y] $, then a direct computation gives:
$$N_{J}(X, Y)=(\nabla_{JX}J)Y-(\nabla_{JY}J)X+J(\nabla_{Y}J)X-J(\nabla_{X}J)Y+\Phi (T^{\nabla})(X,Y),$$
where:
$$\Phi (T^{\nabla})(X,Y):=-T^{\nabla}(JX,JY)+JT^{\nabla}(JX,Y)+JT^{\nabla}(X,JY)-J^2T^{\nabla}(X,Y).$$
In particular, if $\nabla$ is a torsion free $J$-connection, then $\hat{J}_m$ is $\nabla$-integrable.\\

Let $\nabla^g$ be the Levi-Civita connection of $g$ and define a linear connection $D$ on $M$ by $D:=\nabla^g+F$, where $F$ is a $(1,2)$-type tensor field such that
$$\left\{
    \begin{array}{ll}
      DJ=0 \\
      Dg=0.
    \end{array}
  \right.
$$
This is equivalent to
$$\left\{
    \begin{array}{ll}
      (\nabla^g_XJ)Y= J(F(X,Y))-F(X,JY)\\
      g(F(X,Y),Z)+g(Y,F(X,Z))=0
    \end{array}
  \right.,
$$
for any $X,Y,Z\in C^{\infty}(TM)$.\\

Consider the bracket $[\cdot,\cdot]_{D}$ on $C^{\infty}(TM\oplus T^*M)$ \cite{na}:
$$[X+\alpha,Y+\beta]_{D}:=[X,Y]+D_X\beta-D_Y\alpha,$$
for any $X,Y\in C^{\infty}(TM)$ and $\alpha,\beta\in C^{\infty}(T^*M)$.

Define the connection $\hat{D}$ on $TM\oplus T^*M$ by \cite{n}:
$$\hat{D}_X(Y+\beta):=D_XY+D_X\beta,$$
for any $X,Y\in C^{\infty}(TM)$ and $\beta\in C^{\infty}(T^*M)$. It follows that:
$$\hat{D}_X(Y+\beta)=\nabla^g_XY+F(X,Y)+\nabla_X\beta-\beta\circ F(X,\cdot).$$

Let $n$ be the dimension of $M$ and assume that $q\neq 0$. Denote by $\{x^1,...,x^n\}$ the local coordinates on $M$ and let $\{X_1,...,X_n\}$ be the corresponding local frame for $TM$. Following \cite{k} we define:
$$F(X_i,X_j):=\omega(X_j)X_i-\omega(X_l)g^{lk}g_{ij}X_k+ {1\over q}\omega(JX_j)JX_i-{1\over q}\omega(JX_l)g^{lk}J^s_j g_{is}X_k,$$
where $\omega$ is a $1$-form on $M$ and we use Einstein's convention of summation.

We immediately have that $g(F(X_i,X_j),X_r)+g(X_j,F(X_i,X_r))=0$, for all $i,j,r$, therefore, $Dg=0$, for any $1$-form $\omega$. Moreover, the torsion of $D$ is given by:
$$T^D(X,Y)=\omega(Y)X-\omega (X)Y+ {1\over q}\omega (JY)JX-{1\over q}\omega (JX)JY,$$
for any $X,Y \in C^{\infty}(TM).$

\begin{lemma}
$T^D$ satisfies the following properties:
$$T^D(JX,Y)=JT^D(X,Y)=T^D(X,JY)$$
$$\Phi(T^D)(X,Y)=0,$$
for any $X,Y\in C^{\infty}(TM)$.
\end{lemma}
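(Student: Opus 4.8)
The plan is to reduce everything to the single identity $T^D(JX,Y)=JT^D(X,Y)$, since the remaining equality in the first line and the vanishing of $\Phi(T^D)$ will then follow formally from it together with the antisymmetry of the torsion. First I would record that $T^D$ is skew-symmetric, $T^D(Y,X)=-T^D(X,Y)$, which is immediate from the stated formula for $T^D$ (the first two terms and the last two terms each change sign under the interchange $X\leftrightarrow Y$).

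To establish $T^D(JX,Y)=JT^D(X,Y)$ I would substitute $JX$ for $X$ in the defining formula for $T^D$ and simplify using the metallic relation $J^2=pJ+qI$. Concretely, $T^D(JX,Y)$ contains the terms $\omega(J(JX))JY$ and $\frac{1}{q}\omega(JY)J(JX)$; applying $J^2X=pJX+qX$ turns $\omega(J^2X)$ into $p\omega(JX)+q\omega(X)$ and $J^2X$ into $pJX+qX$. On the other side, $JT^D(X,Y)$ contains $\frac{1}{q}\omega(JY)J^2X$ and $\frac{1}{q}\omega(JX)J^2Y$, which I would rewrite the same way. The key point is that the factor $\frac1q$ exactly absorbs the $q$ produced by $J^2=pJ+qI$, and after collecting terms both sides reduce to the identical expression $\omega(Y)JX-\omega(JX)Y-\omega(X)JY+\omega(JY)X+\frac{p}{q}\omega(JY)JX-\frac{p}{q}\omega(JX)JY$. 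The second equality then needs no further computation: using skew-symmetry and the identity just proved, $T^D(X,JY)=-T^D(JY,X)=-JT^D(Y,X)=JT^D(X,Y)$.

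For the second statement I would simply feed the first identities into the definition of $\Phi$. From $T^D(JX,Y)=T^D(X,JY)=JT^D(X,Y)$ we get $JT^D(JX,Y)=JT^D(X,JY)=J^2T^D(X,Y)$, and applying the identity twice gives $T^D(JX,JY)=JT^D(X,JY)=J^2T^D(X,Y)$ as well. Substituting into
$$\Phi(T^D)(X,Y)=-T^D(JX,JY)+JT^D(JX,Y)+JT^D(X,JY)-J^2T^D(X,Y)$$
yields $-J^2T^D(X,Y)+J^2T^D(X,Y)+J^2T^D(X,Y)-J^2T^D(X,Y)=0$.

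I do not expect a genuine obstacle here: the whole lemma is a direct verification, and the only place where any care is needed is the bookkeeping in the first identity, where one must use the metallic relation precisely in the form $J^2=pJ+qI$ so that the $\frac1q$ coefficients appearing in $T^D$ combine correctly with the $q$ arising from $J^2$. Everything after that first identity is purely formal and uses only skew-symmetry.
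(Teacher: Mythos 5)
Your proof is correct and follows essentially the same route as the paper: a direct computation of $T^D(JX,Y)$ and $JT^D(X,Y)$ using the metallic relation $J^2=pJ+qI$ (with the $\frac{1}{q}$ factors absorbing the $q$ exactly as you describe), after which $\Phi(T^D)=0$ follows formally. Your use of the skew-symmetry of $T^D$ to deduce $T^D(X,JY)=JT^D(X,Y)$ without a third computation, and your explicit four-term cancellation for $\Phi(T^D)$, are minor streamlinings of the same argument, which the paper carries out by computing all expressions directly.
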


\begin{proof}
From a direct computation we get:
$$JT^D(X,Y)=\omega(Y)JX-\omega (X)JY+ {p\over q}\omega (JY)JX-{p\over q}\omega (JX)JY+\omega(JY)X-\omega(JX)Y$$
which is equal to $T^D(JX,Y)$ and $T^D(X,JY)$.

Consequently, we have $\Phi (T^{\nabla})(X,Y)=0$.
\end{proof}

Recently C. Karaman \cite{k} constructed metallic semi-symmetric metric $J$-connections $D$ on locally decomposable metallic Riemannian manifolds $(M,J,g)$. These connections satisfy:
$$DJ=0, \ Dg=0, \ T^D(X,Y)=\omega(Y)X-\omega (X)Y+ {1\over q}\omega (JY)JX-{1\over q}\omega (JX)JY,$$
for any $X,Y \in C^{\infty}(TM).$ In particular, we can state the following:

\begin{proposition} Let $(M,J,g)$ be a locally decomposable metallic Riemannian manifold and let $D$ be a metallic semi-symmetric metric $J$-connection. Then ${\hat J_m}$ is $D$-integrable.
\end{proposition}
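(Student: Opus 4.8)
The plan is to reduce the $D$-integrability of $\hat{J}_m$ to the single requirement that $J$ be integrable, and then to obtain integrability of $J$ from the vanishing of $\Phi(T^{D})$ recorded in the Lemma. First I would invoke the characterization established earlier, namely that $\hat{J}_m$ is $\nabla$-integrable if and only if $J$ is integrable and $(\nabla_{JX}J)=(\nabla_X J)J$ for all $X\in C^{\infty}(TM)$. Specializing to $\nabla=D$ and using that $D$ is by hypothesis a $J$-connection, so that $DJ=0$, both sides of $(D_{JX}J)=(D_X J)J$ vanish identically; hence the mixed condition is automatic and the statement reduces entirely to showing $N_J=0$.

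To prove $N_J=0$, I would apply the general torsion identity
$$N_J(X,Y)=(\nabla_{JX}J)Y-(\nabla_{JY}J)X+J(\nabla_Y J)X-J(\nabla_X J)Y+\Phi(T^{\nabla})(X,Y),$$
valid for an arbitrary linear connection, with the choice $\nabla=D$. Once more the relation $DJ=0$ annihilates each of the four covariant-derivative terms, leaving $N_J(X,Y)=\Phi(T^{D})(X,Y)$, which is zero for all $X,Y$ by the Lemma. Therefore $J$ is integrable, and combining this with the first paragraph the earlier characterization yields that $\hat{J}_m$ is $D$-integrable.

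This argument should be viewed as extending the remark that a torsion-free $J$-connection makes $\hat{J}_m$ integrable: here $D$ carries nonzero torsion, but the semi-symmetric form of $T^{D}$ is precisely what forces $\Phi(T^{D})=0$, while the role of the locally decomposable hypothesis is to guarantee, through Karaman's construction, that a connection with $DJ=0$ and $Dg=0$ and torsion of the prescribed shape actually exists. I do not anticipate a genuine obstacle: the proof is an assembly of the preceding Proposition and Lemma, and the only point needing attention is the twofold use of $DJ=0$, which simultaneously trivializes the mixed integrability condition and eliminates every covariant-derivative term from the Nijenhuis identity, so that the entire content collapses onto the already-proven identity $\Phi(T^{D})=0$.
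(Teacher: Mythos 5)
Your proof is correct and follows exactly the route the paper intends: the paper gives no explicit proof (the proposition is stated as an immediate consequence, ``In particular, we can state the following''), and the intended argument is precisely your assembly of the earlier characterization of $\nabla$-integrability of $\hat{J}_m$, the identity $N_J(X,Y)=(\nabla_{JX}J)Y-(\nabla_{JY}J)X+J(\nabla_Y J)X-J(\nabla_X J)Y+\Phi(T^{\nabla})(X,Y)$ specialized to $\nabla=D$ with $DJ=0$, and the Lemma giving $\Phi(T^{D})=0$. Your observation that the Lemma applies because its proof uses only the semi-symmetric shape of $T^{D}$ and the relation $J^2=pJ+qI$ (which Karaman's connections satisfy by definition) is exactly the point that makes the ``in particular'' legitimate.
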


\begin{proposition}
Let $(\hat{J}_m:=\begin{pmatrix}
               J & 0 \\
               0 & J^* \\
         \end{pmatrix}, \hat{g})$ be the generalized metallic Riemannian structure induced by the metallic Riemannian structure $(J,g)$ on $M$ with $\hat{g}$ the Riemannian metric defined by (\ref{e}). Then:
         \begin{enumerate}
           \item $\hat{D}\hat{J}_m=0$ if and only if $DJ=0$;
           \item $\hat{D}\hat{g}=0$ if and only if $Dg$.
         \end{enumerate}
\end{proposition}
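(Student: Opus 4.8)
The plan is to exploit the fact that both $\hat{J}_m$ and $\hat{g}$ are block-diagonal with respect to the splitting $TM\oplus T^*M$ and that the connection $\hat{D}$ preserves this splitting, since $\hat{D}_X(Y+\beta)=D_XY+D_X\beta$. Consequently each parallelism condition will decouple into a statement on $TM$ (which is exactly the desired condition on $D$) plus a companion statement on $T^*M$, and the whole argument reduces to showing that the companion statement is equivalent to the $TM$ one.

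For the first assertion, I would first compute, straight from the definition of the induced covariant derivative of an endomorphism, that
$$(\hat{D}_X\hat{J}_m)(Y+\beta)=\hat{D}_X\big(\hat{J}_m(Y+\beta)\big)-\hat{J}_m\big(\hat{D}_X(Y+\beta)\big)=(D_XJ)Y+(D_XJ^*)\beta,$$
so that $\hat{D}\hat{J}_m=0$ is equivalent to the pair $DJ=0$ and $DJ^*=0$. Restricting to the $TM$-summand ($\beta=0$) immediately gives the implication $\hat{D}\hat{J}_m=0\Rightarrow DJ=0$. For the converse I would use the dual connection formula $(D_X\alpha)(Y)=X(\alpha(Y))-\alpha(D_XY)$ to establish the identity
$$[(D_XJ^*)\alpha](Y)=\alpha\big((D_XJ)Y\big),$$
after which nondegeneracy of the pairing between $T^*M$ and $TM$ shows $DJ=0\Leftrightarrow DJ^*=0$. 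This settles part (1).

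For the second assertion I would write $\hat{g}$ as $g$ on $TM$ and $g^*(\beta,\gamma):=g(\sharp_g\beta,\sharp_g\gamma)$ on $T^*M$, with no cross terms, and compute
$$(\hat{D}_X\hat{g})(Y+\beta,Z+\gamma)=(D_Xg)(Y,Z)+(D_Xg^*)(\beta,\gamma).$$
Again restricting to $\beta=\gamma=0$ gives $\hat{D}\hat{g}=0\Rightarrow Dg=0$ at once. The content of the converse is to show that $Dg=0$ forces $D_Xg^*=0$ as well. The key step here is that $D$ preserves $g$ if and only if it preserves the musical isomorphism, which I would obtain from the identity $[(D_X\flat_g)Y](Z)=(D_Xg)(Y,Z)$; differentiating $\flat_g\circ\sharp_g=\mathrm{id}$ then yields $D\sharp_g=0$. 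With $D_X(\sharp_g\beta)=\sharp_g(D_X\beta)$ in hand one computes $(D_Xg^*)(\beta,\gamma)=(D_Xg)(\sharp_g\beta,\sharp_g\gamma)$, which vanishes, completing part (2).

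I expect the main obstacle to be exactly the bookkeeping on the cotangent side: correctly relating the dual action of $D$ on $T^*M$ to the objects $J^*$ and $g^*=g(\sharp_g\,\cdot\,,\sharp_g\,\cdot\,)$ built from the musical isomorphisms of $g$. The two pivotal identities $[(D_XJ^*)\alpha](Y)=\alpha((D_XJ)Y)$ and $D\sharp_g=0\Leftrightarrow Dg=0$ carry all the weight; once they are verified, both equivalences follow formally from the block-diagonal structure and the nondegeneracy of the relevant pairings. (As written, item (2) should read ``$\hat{D}\hat{g}=0$ if and only if $Dg=0$''.)
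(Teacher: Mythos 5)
Your proposal is correct and follows essentially the same route as the paper: decouple everything along the block-diagonal structure of $\hat{J}_m$, $\hat{g}$ and $\hat{D}$, and transfer the cotangent-side conditions back to $TM$ via the duality identities $[(D_XJ^*)\alpha](Y)=\alpha((D_XJ)Y)$ and the relation between $D_Xg^*$ and $D_Xg$ through the musical isomorphisms. The only cosmetic difference is in part (2): the paper invokes the unconditional identity $(D_Xg^*)(\beta,\gamma)=-(D_Xg)(\sharp_g\beta,\sharp_g\gamma)$, which gives both implications at once, whereas you reach the same conclusion slightly more indirectly via $Dg=0\Rightarrow D\flat_g=0\Rightarrow D\sharp_g=0$.
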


\begin{proof}
Remark that $\hat{D}\hat{J}_m=0$ is equivalent to $(D_XJ)Y+\beta\circ D_XJ=0$, for any $X,Y\in C^{\infty}(TM)$ and $\beta\in C^{\infty}(T^*M)$ and
$\hat{D}\hat{g}=0$ is equivalent to $(D_Xg)(Y,Z)-(D_Xg)(\sharp_g \beta,\sharp_g \gamma)=0$, for any $X,Y,Z\in C^{\infty}(TM)$ and $\beta, \gamma\in C^{\infty}(T^*M)$.
\end{proof}

\begin{definition} A smooth map $f$ between two metallic manifolds $(M_1,J_1)$ and $(M_2,J_2)$ is called \emph{metallic} if $f_*\circ J_1=J_2\circ f_*$.
\end{definition}

\begin{remark}
A metallic diffeomorphism $f$ between two metallic manifolds $(M_1,J_1)$ and $(M_2,J_2)$ naturally induces an isomorphism $\hat{f}$ between their generalized tangent bundles defined by:
$$\hat{f}:TM_1\oplus T^*M_1\rightarrow TM_2\oplus T^*M_2, \ \ \hat{f}(X+\alpha):=f_*X+((f_*)^*)^{-1}\alpha,$$
where $f_*:TM_1\rightarrow TM_2$ is the tangent map of $f$ and $(f_*)^*:T^*M_2\rightarrow T^*M_1$ is the dual map of $f_*$, that is $((f_*)^*\alpha)(X):=\alpha(f_*X)$, for all $\alpha\in C^{\infty}(T^*M_2)$ and $X\in C^{\infty}(TM_1)$, which preserves the generalized metallic structures $\hat{J}_{i,m}:=\begin{pmatrix}
               J_i & 0 \\
               0 & J_i^* \\
         \end{pmatrix}$, $i=1,2$. Indeed, from $f_*\circ J_1=J_2\circ f_*$ follows $(f_*)^*\circ J_2^*=J_1^*\circ (f_*)^*$, hence $\hat{f}\circ \hat{J}_{1,m}= \hat{J}_{2,m}\circ \hat{f}$.

In particular, if $f:M\rightarrow M$ is a diffeomorphism which preserves the metallic structure $J$, then $\hat{f}$ can be defined by
$$\hat{f}(X+\alpha):=f_*X+(f_*)^*\alpha$$
which coincides with the generalized metallic structure $\hat{J}_m$ when $J=f_*$. In this case, $J$ is invertible and $J^{-1}=\frac{\displaystyle 1}{\displaystyle q}J-\frac{\displaystyle p}{\displaystyle q}I$, for $q\neq 0$.
\end{remark}

\subsection{Generalized product structure induced by $(J,g)$}

Let $(J,g)$ be a metallic Riemannian structure on $M$ such that $J^2=pJ+qI$, $p,q\in \mathbb{R}$. Then $\hat{J}_p:=\begin{pmatrix}
               J & (I-J^2)\sharp_g \\
               \flat_g & -J^* \\
         \end{pmatrix}$ is a generalized product structure on $M$, that is $\hat{J}_p^2=I$.\\

A direct computation gives the following.

\begin{proposition}
The generalized product structure $\hat{J}_p$ induced by the metallic Riemannian structure $(J,g)$ on $M$ is $\nabla$-integrable if and only if the following conditions are satisfied:\\

$ N_J-(I-J^2)\sharp_g(d^{\nabla}g)=0$\\

$(\nabla_{JX}g)Y- (\nabla_{JY}g)X+J^*( (\nabla_{X}g)Y- (\nabla_{Y}g)X)+g( (\nabla_{Y}J)X- (\nabla_{X}J)Y)+$\\

$+g(T^{\nabla}(X,JY)+T^{\nabla}(JX,Y))=0$\\

$(d^{\nabla}g)((I-J^2)Y,X)-(\nabla_{X}J^*)g(JY)+(\nabla_{JX}J^*)g(Y)=0$\\

$(\nabla_{(I-J^2)X}J^*)g(Y)- (\nabla_{(I-J^2)Y}J^*)g(X)=0$\\

$(\nabla_{(I-J^2)X}J^2)Y- (\nabla_{(I-J^2)Y}J^2)X+T^{\nabla}((I-J^2)X,(I-J^2)Y)+$\\

$-(I-J^2)\sharp_g((\nabla_{(I-J^2)X}g)Y- (\nabla_{(I-J^2)Y}g)X)=0$\\

$-(\nabla_{JX}J^2)Y-(\nabla_{(I-J^2)Y}J)X+(\nabla_X J)Y+J(\nabla_X J^2)Y-J^2(\nabla_X J)Y+$\\

$-(I-J^2)\sharp_g ((\nabla_{JX}g)Y-(\nabla_X g)JY)-T^{\nabla}(JX,(I-J^2)Y)+JT^{\nabla}(X,(I-J^2)Y)=0$,\\

\noindent for all $X,Y \in C^{\infty}(TM)$, where we denoted $\flat_g$ by $g$ and the exterior differential associated to $\nabla$ acting on $g$ by $ (d^{\nabla}g)(X,Y):=({\nabla}_X g)(Y)-({\nabla}_Y g)(X)+g(T^{\nabla}(X,Y)).$
\end{proposition}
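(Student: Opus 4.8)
The plan is to use that $N^{\nabla}_{\hat{J}_p}$ is a tensor. Indeed, the bracket $[\cdot,\cdot]_{\nabla}$ is skew-symmetric and satisfies the Leibniz rule $[\sigma,f\tau]_{\nabla}=f[\sigma,\tau]_{\nabla}+(Xf)\tau$ for $\sigma=X+\alpha$ and $f\in C^{\infty}(M)$, so the usual cancellation makes $N^{\nabla}_{\hat{J}_p}$ both $C^{\infty}(M)$-bilinear and skew-symmetric in its two arguments. Consequently $\nabla$-integrability is equivalent to the vanishing of $N^{\nabla}_{\hat{J}_p}$ on the three representative pairs dictated by the splitting $TM\oplus T^*M$: a vector-vector pair $(X,Y)$, a mixed pair $(X,\beta)$, and a covector-covector pair $(\alpha,\beta)$. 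Since $\flat_g$ is an isomorphism I would write every covector argument as $\beta=\flat_g Y$, so that all three evaluations become tensors in $X,Y\in C^{\infty}(TM)$; each takes values in $TM\oplus T^*M$, and projecting onto the $TM$- and $T^*M$-summands produces exactly the $3\times 2=6$ displayed conditions.

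First I would record the action of $\hat{J}_p$ on each summand, namely $\hat{J}_p X=JX+\flat_g X$ on vectors and $\hat{J}_p\alpha=(I-J^2)\sharp_g\alpha-J^*\alpha$ on covectors, and use $\hat{J}_p^2=I$ to replace the term $\hat{J}_p^2[\sigma,\tau]_{\nabla}$ by $[\sigma,\tau]_{\nabla}$. Because $\hat{J}_p$ is off-diagonal, applying it to a vector yields a covector and vice versa, so each bracket expands into several cross terms that must be sorted by type. The algebraic engine is a short list of identities turning covariant derivatives of musical images into the quantities in the statement: $\nabla_X(\flat_g Y)=\flat_g(\nabla_X Y)+(\nabla_X g)(Y,\cdot)$ together with its $\sharp_g$-analogue, the commutations $\flat_g\circ J^k=(J^*)^k\circ\flat_g$ and $\sharp_g\circ(J^*)^k=J^k\circ\sharp_g$ recalled in the preliminaries, and the skew identity $\nabla_X(\flat_g Y)-\nabla_Y(\flat_g X)=(d^{\nabla}g)(X,Y)+\flat_g[X,Y]$ read off from the definition of $d^{\nabla}g$.

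For the $TM$-part of the $(X,Y)$-block these identities already give the cleanest check: the pure Lie-bracket terms assemble into $N_J(X,Y)+(I-J^2)[X,Y]$, while the $\nabla\flat_g$ terms contribute $-(I-J^2)\sharp_g\big((d^{\nabla}g)(X,Y)\big)-(I-J^2)[X,Y]$, so the two copies of $(I-J^2)[X,Y]$ cancel and leave precisely $N_J-(I-J^2)\sharp_g(d^{\nabla}g)$, the first condition. The remaining five components are obtained by the same collect-and-simplify procedure, keeping the $TM$- and $T^*M$-summands separated throughout and reducing every derivative to the canonical $\nabla g$, $\nabla J$, $\nabla J^2$, $T^{\nabla}$, $d^{\nabla}g$ form before comparing.

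The main obstacle is organizational rather than conceptual: since $\nabla$ commutes neither with $\sharp_g,\flat_g$ nor with $J$, every term forces the systematic insertion of $\nabla g$, $\nabla J$ and torsion contributions, and the bookkeeping is heaviest in the two blocks where a covector is pushed twice through $\hat{J}_p$. These are the covector-covector block, where the factor $(I-J^2)\sharp_g$ enters both slots and produces the $(\nabla_{(I-J^2)X}J^2)Y$ and $T^{\nabla}\big((I-J^2)X,(I-J^2)Y\big)$ terms of the fifth equation, and the mixed block, which pairs a $JX$-direction with an $(I-J^2)Y$-direction and yields the longest, sixth, identity. Careful type-separation and reduction to canonical form is exactly what makes the six equations emerge in the stated shape.
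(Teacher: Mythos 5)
Your proposal is correct and is essentially the paper's own argument: the paper justifies this proposition only by the phrase ``a direct computation gives the following,'' and your organized expansion of $N^{\nabla}_{\hat{J}_p}$ --- tensoriality and skew-symmetry reducing everything to the three type-blocks $(X,Y)$, $(X,\beta)$, $(\alpha,\beta)$, each split into its $TM$- and $T^*M$-components to yield the six conditions --- is exactly that computation, carried out with the correct identities ($\hat{J}_p^2=I$, $\flat_g\circ J^k=(J^*)^k\circ\flat_g$, $\nabla_X(\flat_g Y)-\nabla_Y(\flat_g X)=(d^{\nabla}g)(X,Y)+\flat_g[X,Y]$). Your explicit verification of the first condition, including the cancellation of the two $(I-J^2)[X,Y]$ terms, checks out, and the remaining five blocks follow by the same bookkeeping, so if anything you supply more justification (the tensoriality argument) than the paper does.
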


\begin{proposition} Let $(M,J,g)$ be a locally metallic Riemannian manifold. Then $\hat{J}_p$ is $\nabla$-integrable, for $\nabla$ the Levi-Civita connection of $g$.
\end{proposition}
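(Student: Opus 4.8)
The plan is to reduce the claim to the six integrability conditions of the preceding proposition and to show that, for the Levi-Civita connection of a locally metallic manifold, every term in each of those conditions vanishes. First I would record the three structural facts that hold under the hypothesis. Since $\nabla=\nabla^g$ is the Levi-Civita connection it is torsion-free, so $T^{\nabla}=0$, and it is metric-compatible, so $\nabla g=0$; the locally metallic assumption gives $\nabla J=0$. From $\nabla g=0$ and $T^{\nabla}=0$ it follows at once that $d^{\nabla}g=0$, because $(d^{\nabla}g)(X,Y)=(\nabla_X g)(Y)-(\nabla_Y g)(X)+g(T^{\nabla}(X,Y))$ and all three summands vanish.

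Next I would propagate $\nabla J=0$ to the remaining operators appearing in the blocks of $\hat{J}_p$. Because $J^2=pJ+qI$ with $p,q$ constant, $\nabla J^2=p\,\nabla J=0$. For the dual endomorphism I would use the relation $\flat_g\circ J=J^*\circ\flat_g$ established in the preliminaries, together with $\nabla\flat_g=0$ (equivalently $\nabla\sharp_g=0$), both consequences of $\nabla g=0$; differentiating $J^*=\flat_g\circ J\circ\sharp_g$ then yields $\nabla J^*=0$. Finally, the formula $N_J(X,Y)=(\nabla_{JX}J)Y-(\nabla_{JY}J)X+J(\nabla_Y J)X-J(\nabla_X J)Y+\Phi(T^{\nabla})(X,Y)$ recorded earlier shows $N_J=0$, since every term carries a factor of $\nabla J$ or of $T^{\nabla}$.

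With these identities in hand, I would substitute into the six conditions one by one. Each condition is an algebraic combination of the tensors $N_J$, $\nabla g$, $d^{\nabla}g$, $\nabla J$, $\nabla J^*$, $\nabla J^2$ and $T^{\nabla}$, evaluated on various arguments (such as $(I-J^2)X$ or $JX$), and every summand has one of these tensors as a coefficient. Since all of them vanish identically, each of the six expressions reduces to $0$, and the previous proposition then delivers the $\nabla$-integrability of $\hat{J}_p$.

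As for the main obstacle, there is essentially no analytic difficulty here: the work is entirely bookkeeping, checking that no surviving term slips through. The only point demanding a little care is the implication $\nabla J=0\Rightarrow\nabla J^*=0$, where one must justify that $\nabla$ commutes with the musical isomorphisms; this is precisely where metric compatibility $\nabla g=0$ of the Levi-Civita connection enters, and it explains why the statement is phrased for $\nabla^g$ rather than for an arbitrary $J$-connection.
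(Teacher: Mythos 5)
Your proposal is correct and follows essentially the same route as the paper: both reduce to the six integrability conditions of the preceding proposition and observe that, for the Levi-Civita connection with $\nabla J=0$, every summand vanishes because each carries a factor of $T^{\nabla}$, $\nabla g$, $d^{\nabla}g$, $N_J$, $\nabla J$, $\nabla J^*$ or $\nabla J^2$. Your write-up is in fact slightly more complete than the paper's, which leaves the implications $\nabla J=0\Rightarrow\nabla J^*=0$ (via $\nabla g=0$), $\nabla J^2=0$ and $N_J=0$ implicit.
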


\begin{proof}
From previous proposition, we have that the generalized product structure $\hat{J}_p$ is $\nabla$-integrable if and only if the following conditions are satisfied:\\

$ N_J=0$\\

$ (\nabla_{Y}J)X- (\nabla_{X}J)Y)=0$\\

$(\nabla_{X}J^*)J^*-(\nabla_{JX}J^*)=0$\\

$(\nabla_{(I-J^2)X}J^*)g(Y)- (\nabla_{(I-J^2)Y}J^*)g(X)=0$\\

$(\nabla_{(I-J^2)X}J^2)Y- (\nabla_{(I-J^2)Y}J^2)X=0$\\

$-(\nabla_{JX}J^2)Y-(\nabla_{(I-J^2)Y}J)X+(\nabla_X J)Y+J(\nabla_X J^2)Y-J^2(\nabla_X J)Y=0$\\

$-(\nabla_{JX}J^2)Y+(\nabla_{(I+J^2)Y}J)X-(\nabla_X J)Y+J(\nabla_X J^2)Y-J^2(\nabla_X J)Y=0$,\\

\noindent for all $X,Y \in C^{\infty}(TM).$ In particular, if $\nabla J=0 $, then $\hat{J}_p$ is $\nabla$-integrable.
\end{proof}

\begin{definition}
A generalized product structure $\hat{J}$ on $M$ is called \textit{anti-pseudo-calibrated} if it is $(\cdot,\cdot)$-anti-invariant and the bilinear symmetric form defined by $(\cdot,\hat{J}\cdot)$ on $TM$ is non-degenerate, where $$(X+\alpha,Y+\beta):=-\frac{1}{2}(\alpha(Y)-\beta(X))$$ is the natural symplectic structure on $TM\oplus T^*M$.
\end{definition}

\begin{remark}
The generalized product structure $\hat{J}_p$ is anti-pseudo-calibrated with respect to $(\cdot,\cdot)$.
\end{remark}

\begin{proposition} Let $\hat{J}_p$ be the generalized product structure defined by the metallic Riemannian structure $(J,g)$ on $M$. Then:
$$G(\sigma,\tau):=(\sigma,\hat{J}_p(\tau))$$
with $\sigma, \tau \in C^{\infty}(TM \oplus T^*M)$, is a neutral metric.

\end{proposition}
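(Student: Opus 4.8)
The plan is to combine the eigenbundle decomposition of $\hat{J}_p$ with the anti-pseudo-calibrated property already recorded in the preceding remark. Recall that being anti-pseudo-calibrated means $\hat{J}_p$ is $(\cdot,\cdot)$-anti-invariant and that $G=(\cdot,\hat{J}_p\cdot)$ is a non-degenerate symmetric bilinear form; hence $G$ is automatically a pseudo-Riemannian metric on $TM\oplus T^*M$, and the only thing left to establish is that its signature is neutral, i.e. equal to $(n,n)$ with $n=\dim M$. If one prefers a self-contained derivation, symmetry of $G$ follows from anti-invariance alone: since $(\hat{J}_p\sigma,\hat{J}_p\tau)=-(\sigma,\tau)$ and $\hat{J}_p^2=I$, the operator $\hat{J}_p$ is skew-adjoint for $(\cdot,\cdot)$, whence $G(\tau,\sigma)=(\tau,\hat{J}_p\sigma)=-(\hat{J}_p\sigma,\tau)=(\sigma,\hat{J}_p\tau)=G(\sigma,\tau)$.

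Because $\hat{J}_p^2=I$, the bundle $TM\oplus T^*M$ splits as $E_+\oplus E_-$ into the $(\pm1)$-eigenbundles of $\hat{J}_p$, with smooth projectors $\frac{1}{2}(I\pm\hat{J}_p)$. First I would compute the rank of each summand from the trace: since $\operatorname{tr}\hat{J}_p=\operatorname{tr}J-\operatorname{tr}J^*=0$ (the off-diagonal blocks do not contribute, and $J$ and its $g$-adjoint $J^*$ share the same trace), and since the two ranks add to $2n$, one obtains $\operatorname{rk}E_+=\operatorname{rk}E_-=n$.

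Next I would show that each eigenbundle is totally $G$-isotropic. For $\sigma\in E_\pm$ one has $G(\sigma,\sigma)=(\sigma,\hat{J}_p\sigma)=\pm(\sigma,\sigma)=0$, the last equality because $(\cdot,\cdot)$ is skew-symmetric; polarizing, and using that $E_\pm$ is a subspace so that $\sigma+\sigma'$ stays in the same eigenbundle, yields $G(\sigma,\sigma')=0$ for all $\sigma,\sigma'$ lying in the same $E_\pm$. Thus $E_+$ and $E_-$ are two $n$-dimensional $G$-isotropic subbundles of the $2n$-dimensional bundle $TM\oplus T^*M$.

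Finally I would invoke the elementary fact that a non-degenerate symmetric bilinear form on a $2n$-dimensional space admitting an $n$-dimensional isotropic subspace must have signature $(n,n)$: the maximal dimension of an isotropic subspace for signature $(a,b)$ is $\min(a,b)$, so $\min(a,b)\ge n$ together with $a+b=2n$ forces $a=b=n$. This shows $G$ is neutral and completes the argument. The only point needing genuine care is the rank count for the eigenbundles, that is, confirming $\operatorname{tr}\hat{J}_p=0$ so that the two maximal isotropic subbundles are balanced; everything else is formal, the whole being the standard correspondence between product (para-complex) structures anti-compatible with a symplectic form and neutral metrics.
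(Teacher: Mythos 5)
Your proof is correct, and it takes a genuinely different route from the paper's. The paper computes the Gram matrix explicitly: $2G$ has block form $\left(\begin{smallmatrix} g & -J \\ -J & -(I-J^2)\sharp_g \end{smallmatrix}\right)$; pointwise it takes $g=I$ and $J=\Lambda$ diagonal (possible since $J$ is $g$-symmetric) and applies the Gauss--Lagrange reduction, where the lower block becomes $p\Lambda+(q-1)I-\Lambda^2=-I$ precisely because the eigenvalues satisfy the metallic equation, yielding the normal form $\mathrm{diag}(I,-I)$ and hence signature $(n,n)$. You instead argue structurally: the $\pm1$-eigenbundles of the involution $\hat{J}_p$ have rank $n$ (your trace count, using $\mathrm{tr}\,J^*=\mathrm{tr}\,J$), they are $G$-isotropic (by skew-symmetry of the symplectic pairing), and a non-degenerate symmetric form in dimension $2n$ admitting an $n$-dimensional isotropic subspace must be neutral. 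Your argument is coordinate-free and more general --- it proves neutrality for \emph{any} anti-pseudo-calibrated generalized product structure, the metallic equation entering only through $\hat{J}_p^2=I$ --- whereas the paper's computation is concrete and exhibits the explicit matrix reduction, which can be reused elsewhere. Two minor observations: your appeal to the preceding remark for non-degeneracy is legitimate (it precedes the proposition in the paper), and in any case non-degeneracy is a one-liner, since $\hat{J}_p$ is invertible and $(\cdot,\cdot)$ is non-degenerate, so $G(\sigma,\cdot)\equiv 0$ forces $\sigma=0$; also, the trace count could be avoided entirely by noting that anti-invariance makes each eigenbundle isotropic for the symplectic form itself, hence of rank at most $n$, which together with complementarity forces both ranks to equal $n$.
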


\begin{proof} Locally we can write $2G$ in block matrix form as:
$$\begin{pmatrix}
               g & -J \\
               -J & -(I-J^2)\sharp_g \\
         \end{pmatrix}$$
As $J$ is $g$-symmetric, pointwise, we can take $g=I$ and $J=\Lambda$ the diagonal matrix with eigenvalues ${\lambda}_1,...,{\lambda}_n$ which are solutions of the metallic equation ${\lambda}^2-p\lambda-q=0$. Then we get:
$$\begin{pmatrix}
               I & -\Lambda \\
               -\Lambda & p\Lambda+(q-1)I\\
         \end{pmatrix}.$$
In order to compute the indices of $2G$, we can use Gauss-Lagrange algorithm and by elementary operations on rows and columns of the matrix we get the form:
$$\begin{pmatrix}
               I & 0\\
              0 & -I+({\Lambda}^2-p\Lambda-qI)\\
         \end{pmatrix},$$
therefore:
$$\begin{pmatrix}
               I & 0\\
              0 & -I\\
         \end{pmatrix}$$
hence $2G$ has $n$ positive and $n$ negative eigenvalues and the proof is complete.
\end{proof}
\begin{proposition}
Let $(\hat{J}_p:=\begin{pmatrix}
               J & (I-J^2)\sharp_g \\
               \flat_g & -J^* \\
         \end{pmatrix}, \hat{g})$ be the generalized product structure induced by the metallic Riemannian structure $(J,g)$ on $M$ with $\hat{g}$ the Riemannian metric defined by (\ref{e}). Then:
         $$\hat{D}\hat{J}_p=0 \ \textit{if and only if} \ DJ=0 \ \textit{and} \ Dg=0.$$
\end{proposition}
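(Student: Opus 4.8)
The plan is to unwind the tensor equation $\hat D\hat J_p=0$ into its block components and reduce everything to the $D$-parallelism of $J$ and of $g$. Since $\hat D$ acts diagonally, the quantity $(\hat D_X\hat J_p)(\sigma)=\hat D_X(\hat J_p\sigma)-\hat J_p(\hat D_X\sigma)$ is $C^\infty(M)$-linear in $\sigma=Y+\beta$, so it suffices to test it separately on $Y\in C^\infty(TM)$ (setting $\beta=0$) and on $\beta\in C^\infty(T^*M)$ (setting $Y=0$). Reading off the two columns of $\hat J_p$, the condition $\hat D\hat J_p=0$ becomes the four identities
$$D_X(JY)=JD_XY,\qquad D_X(\flat_g Y)=\flat_g D_XY,$$
$$D_X((I-J^2)\sharp_g\beta)=(I-J^2)\sharp_g D_X\beta,\qquad D_X(J^*\beta)=J^*D_X\beta,$$
for all $X,Y\in C^\infty(TM)$ and $\beta\in C^\infty(T^*M)$, where on $T^*M$ the connection is the dual one, $(D_X\beta)(Y)=X(\beta(Y))-\beta(D_XY)$.

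Before combining these I would record the auxiliary equivalences tying the cotangent objects back to $J$ and $g$. Computing with the dual connection (equivalently, differentiating the pointwise identities $\flat_g\circ J=J^*\circ\flat_g$ and $\sharp_g\circ J^*=J\circ\sharp_g$) gives
$$((D_XJ^*)\beta)(Y)=\beta((D_XJ)Y),\qquad ((D_X\flat_g)Y)(Z)=(D_Xg)(Y,Z),$$
whence $DJ^*=0\Leftrightarrow DJ=0$ and $D\flat_g=0\Leftrightarrow Dg=0$; since $\sharp_g=\flat_g^{-1}$ one also has $D\sharp_g=0\Leftrightarrow Dg=0$, and $DJ=0$ forces $D(I-J^2)=0$. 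These are short computations from the definition of the dual connection and constitute the only genuinely \emph{structural} input; the rest is bookkeeping.

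For the implication $DJ=0$ and $Dg=0\Rightarrow\hat D\hat J_p=0$ I would feed these equivalences into the four identities: $D_X(JY)=JD_XY$ and $D_X(J^*\beta)=J^*D_X\beta$ are exactly $DJ=0$ and $DJ^*=0$; $D_X(\flat_g Y)=\flat_g D_XY$ is $D\flat_g=0$; and the mixed entry factors as $D_X((I-J^2)\sharp_g\beta)=(I-J^2)D_X(\sharp_g\beta)=(I-J^2)\sharp_g D_X\beta$ using $D(I-J^2)=0$ together with $D\sharp_g=0$. Thus all four identities hold and $\hat D\hat J_p=0$.

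For the converse I would exploit that the \emph{left} column of $\hat J_p$ already contains everything: testing $\hat D\hat J_p=0$ on $Y+0$ gives precisely $D_X(JY)=JD_XY$ in the $TM$-slot and $D_X(\flat_g Y)=\flat_g D_XY$ in the $T^*M$-slot, i.e. $DJ=0$ and, via $D\flat_g=0\Leftrightarrow Dg=0$, $Dg=0$. So the converse is immediate and uses none of the off-diagonal terms. The only point requiring care is the correct evaluation of $\hat D_X$ on the off-diagonal entries $(I-J^2)\sharp_g\beta$ and $-J^*\beta$ and the consistent use of the induced dual connection on $T^*M$; once the auxiliary parallelism equivalences are established, no further obstacle arises.
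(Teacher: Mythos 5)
Your proposal is correct and follows essentially the same route as the paper: both decompose $\hat{D}\hat{J}_p$ into its action on vector fields (the left column, yielding $(D_XJ)Y+(D_X\flat_g)Y$, whose $TM$- and $T^*M$-components vanish separately and give the converse) and on $1$-forms (the right column, which is parallel once $DJ=0$ and $Dg=0$). The only cosmetic difference is that the paper expands $(I-J^2)\sharp_g=-pJ\sharp_g-(q-1)\sharp_g$ via the metallic equation, whereas you keep it intact and invoke $D(I-J^2)=0$ together with $D\sharp_g=0$.
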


\begin{proof}
Remark that $(\hat{D}_Y\hat{J}_p)X=(D_YJ)X+(D_Yg)X$, for any $X, Y \in C^{\infty} (TM)$ and $(\hat{D}_Y\hat{J}_p)\alpha=-p(D_Y(J {\sharp}_g))\alpha-(q-1)(D_Y {\sharp}_g )\alpha- (D_YJ^*)\alpha$, for any $Y \in C^{\infty}(TM)$ and $\alpha \in C^{\infty}(T^*M)$, therefore the statement. \end{proof}

\begin{remark} Starting with a metallic structure on a manifold, with minimal restrictions on $p$ and $q$, some other generalized metallic structures on its generalized tangent bundle can be constructed as follows.

The metallic structure $J$ on $M$ induces two almost product structures on $M$:
$$F^{\pm }:=\pm (\frac{2}{2\sigma _{p, q}-p}J-\frac{p}{2\sigma _{p, q}-p}I),$$
the almost product structures $F^{\pm }$ induce two generalized product structures on $TM\oplus T^*M$:
$$\hat F^{\pm }:=\begin{pmatrix}
               F^{\pm } & 0 \\
               0 & (F^{\pm })^* \\
         \end{pmatrix}$$
and the generalized product structures $\hat F^{\pm }$ induce two metallic structures on $TM\oplus T^*M$:
$$\hat J_{+,m}^{\pm}:=\pm \frac{2\sigma _{p, q}-p}{2}\hat F^{+ }+\frac{p}{2}I, \ \ \hat J_{-,m}^{\pm}:=\pm \frac{2\sigma _{p, q}-p}{2}\hat F^{- }+\frac{p}{2}I,$$
where $$\hat J_{+,m}^{+}=\hat J_{-,m}^{-}=\begin{pmatrix}
               J & 0 \\
               0 & J^* \\
         \end{pmatrix}$$
         and
$$\hat J_{+,m}^{-}=\hat J_{-,m}^{+}=\begin{pmatrix}
               -J+pI & 0 \\
               0 & -J^*+pI \\
         \end{pmatrix}.$$

The metallic structure $J$ on $M$ induces a generalized product structure on $TM\oplus T^*M$:
$$\hat{J}_p:=\begin{pmatrix}
               J & (I-J^2)\sharp_g \\
               \flat_g & -J^* \\
         \end{pmatrix}$$
and the generalized product structure $\hat{J}_p$ induces two generalized metallic structures on $TM\oplus T^*M$:
$$\hat J_{m}^{\pm}:=\pm \frac{2\sigma _{p, q}-p}{2}\hat{J}_p+\frac{p}{2}I,$$
namely,
$$\hat J_{m}^{+}=\begin{pmatrix}
               \frac{2\sigma _{p, q}-p}{2}J+\frac{p}{2}I & -(pJ+(q-1)I)\sharp_g \\
               \flat_g & -\frac{2\sigma _{p, q}-p}{2}J^*+\frac{p}{2}I \\
         \end{pmatrix}$$
         and
$$\hat J_{m}^{-}=\begin{pmatrix}
               -\frac{2\sigma _{p, q}-p}{2}J+\frac{p}{2}I & -(pJ+(q-1)I)\sharp_g \\
               \flat_g & \frac{2\sigma _{p, q}-p}{2}J^*+\frac{p}{2}I \\
         \end{pmatrix}.$$
\end{remark}

\subsection{Generalized complex structure induced by $(J,g)$}

Let $(J,g)$ be a metallic Riemannian structure on $M$ such that $J^2=pJ+qI$, $p,q\in \mathbb{R}$. Then $\hat{J}_c:=\begin{pmatrix}
               J & -(I+J^2)\sharp_g \\
               \flat_g & -J^* \\
         \end{pmatrix}$ is a generalized complex structure on $M$, that is $\hat{J}_c^2=-I$ \cite{na}.\\

A direct computation gives the following.

\begin{proposition}
The generalized complex structure $\hat{J}_c$ induced by the metallic Riemannian structure $(J,g)$ on $M$ is $\nabla$-integrable if and only if the following conditions are satisfied:\\

$ N_J+(I+J^2)\sharp_g(d^{\nabla}g)=0$\\

$(\nabla_{JX}g)Y- (\nabla_{JY}g)X+J^*( (\nabla_{X}g)Y- (\nabla_{Y}g)X)+g( (\nabla_{Y}J)X- (\nabla_{X}J)Y)+$\\

$+g(T^{\nabla}(X,JY)+T^{\nabla}(JX,Y))=0$\\

$(d^{\nabla}g)((I+J^2)Y,X)+(\nabla_{X}J^*)g(JY)-(\nabla_{JX}J^*)g(Y)=0$\\

$(\nabla_{(I+J^2)X}J^*)g(Y)- (\nabla_{(I+J^2)Y}J^*)g(X)=0$\\

$(\nabla_{(I+J^2)X}J^2)Y- (\nabla_{(I+J^2)Y}J^2)X-T^{\nabla}((I+J^2)X,(I+J^2)Y)+$\\

$-(I+J^2)\sharp_g((\nabla_{(I+J^2)X}g)Y- (\nabla_{(I+J^2)Y}g)X)=0$\\

$-(\nabla_{JX}J^2)Y+(\nabla_{(I+J^2)Y}J)X-(\nabla_X J)Y+J(\nabla_X J^2)Y-J^2(\nabla_X J)Y+$\\

$+(I+J^2)\sharp_g ((\nabla_{JX}g)Y-(\nabla_X g)JY)+T^{\nabla}(JX,(I+J^2)Y)-JT^{\nabla}(X,(I+J^2)Y)=0$,\\

\noindent for all $X,Y \in C^{\infty}(TM)$, where we denoted $\flat_g$ by $g$ and the exterior differential associated to $\nabla$ acting on $g$ by $ (d^{\nabla}g)(X,Y):=({\nabla}_X g)(Y)-({\nabla}_Y g)(X)+g(T^{\nabla}(X,Y)).$
\end{proposition}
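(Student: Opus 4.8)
The plan is to exploit the tensorial character of $N^{\nabla}_{\hat{J}_c}$ and then reduce the statement to a block-by-block computation on a local frame. First I would verify that $N^{\nabla}_{\hat{J}_c}$ is $C^{\infty}(M)$-bilinear: since $[f\sigma,\tau]_{\nabla}=f[\sigma,\tau]_{\nabla}-(\pi(\tau)f)\sigma$, where $\pi:TM\oplus T^*M\to TM$ is the projection, substituting $f\sigma$ into the four terms of $N^{\nabla}_{\hat{J}_c}$ produces the anomalous contributions $-(\pi(\hat{J}_c\tau)f)\hat{J}_c\sigma+(\pi(\tau)f)\hat{J}_c^{2}\sigma+(\pi(\hat{J}_c\tau)f)\hat{J}_c\sigma-(\pi(\tau)f)\hat{J}_c^{2}\sigma$, which cancel in pairs; antisymmetry of $[\cdot,\cdot]_{\nabla}$ then gives tensoriality in the second slot as well. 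Consequently it suffices to evaluate $N^{\nabla}_{\hat{J}_c}$ on the three representative pairs $(X,Y)$, $(X,\beta)$ and $(\alpha,\beta)$ with $X,Y\in C^{\infty}(TM)$ and $\alpha,\beta\in C^{\infty}(T^*M)$. Each value is a section of $TM\oplus T^*M$, so splitting it into its $TM$-part and its $T^*M$-part yields exactly $3\times 2=6$ conditions, matching the six displayed equations.

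For the computation itself I would record the action of $\hat{J}_c$ on the two summands, $\hat{J}_c X=JX+\flat_g X$ and $\hat{J}_c\alpha=-(I+J^2)\sharp_g\alpha-J^*\alpha$, substitute these into the four terms of the Nijenhuis expression, expand every $\nabla$-bracket via $[U+\mu,V+\nu]_{\nabla}=[U,V]+\nabla_U\nu-\nabla_V\mu$, and simplify the final term using $\hat{J}_c^2=-I$. The algebraic simplifications rest on the compatibility relations already established in the excerpt, $\sharp_g\circ(J^*)^k=J^k\circ\sharp_g$ and $\flat_g\circ J^k=(J^*)^k\circ\flat_g$, together with the metallic identities $J^2=pJ+qI$ and $(J^*)^2=pJ^*+qI$, which repeatedly let me convert $J^*$-expressions into $J$-expressions and absorb powers $J^2$. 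The pure vector case should produce $N_J$ in its $TM$-part and the second displayed equation in its $T^*M$-part, while the mixed and pure covector cases produce the remaining four.

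The step I expect to be the main obstacle is the bookkeeping forced by the non-parallelism of $g$ and by the torsion of $\nabla$. Because $\nabla g\neq 0$ in general, the covariant derivatives of the musical isomorphisms are nonzero and must be expressed through $\nabla g$ via $(\nabla_Z\flat_g)X=(\nabla_Z g)(X,\cdot)$ and the formula for $\nabla\sharp_g$ obtained by differentiating $\sharp_g\circ\flat_g=I$; this is precisely the mechanism by which $d^{\nabla}g$ and the $(\nabla g)$-terms enter conditions $1$, $2$, $3$, $5$, $6$. Simultaneously, rewriting $\nabla_U V-\nabla_V U$ as $[U,V]+T^{\nabla}(U,V)$ injects the torsion terms $T^{\nabla}$ into the same equations. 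Each of the three cases therefore spawns a long list of terms that must be regrouped, using the metallic identity, into the compact $(I+J^2)$- and $J$-weighted combinations displayed. To both shorten the derivation and guard against sign errors I would run it in parallel with the generalized product proposition: $\hat{J}_c$ differs from $\hat{J}_p$ only in the replacement of the top-right block $(I-J^2)\sharp_g$ by $-(I+J^2)\sharp_g$ and of $\hat{J}_p^2=I$ by $\hat{J}_c^2=-I$, so propagating these two sign changes through the already-computed $\hat{J}_p$-integrability conditions reproduces the six equations above and confirms them.
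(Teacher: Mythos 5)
Your proposal is correct and takes essentially the same approach as the paper, whose entire justification of this proposition is the sentence ``A direct computation gives the following'': you carry out exactly that computation by expanding $N^{\nabla}_{\hat J_c}$ on the three pairs $(X,Y)$, $(X,\beta)$, $(\alpha,\beta)$ and splitting each value into its $TM$- and $T^*M$-components, and every reduction you invoke (tensoriality of $N^{\nabla}_{\hat J_c}$, the block action $\hat J_c X=JX+\flat_g X$, $\hat J_c\alpha=-(I+J^2)\sharp_g\alpha-J^*\alpha$, conversion of $\nabla\flat_g$ into $\nabla g$ and of $\nabla_U V-\nabla_V U$ into bracket plus torsion) is sound and does produce the six displayed conditions. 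The only wording slip is that the $TM$-part of the pure vector case is the full first condition $N_J+(I+J^2)\sharp_g(d^{\nabla}g)$ rather than $N_J$ alone, but your subsequent explanation of how the $d^{\nabla}g$-terms enter shows you intend exactly this.
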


\begin{proposition} Let $(M,J,g)$ be a locally metallic Riemannian manifold. Then $\hat{J}_c$ is $\nabla$-integrable, for $\nabla$ the Levi-Civita connection of $g$.
\end{proposition}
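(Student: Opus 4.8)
The plan is to read off the statement from the preceding proposition by specializing its six $\nabla$-integrability conditions to the case $\nabla = \nabla^g$, exactly as was done for the generalized product structure $\hat{J}_p$ in the analogous proposition above. First I would recall that the Levi-Civita connection is torsion-free, $T^{\nabla} = 0$, and metric, $\nabla g = 0$; by the very definition $(d^{\nabla}g)(X,Y) := (\nabla_X g)(Y) - (\nabla_Y g)(X) + g(T^{\nabla}(X,Y))$ it then follows that $d^{\nabla}g = 0$ as well. Substituting these three vanishing quantities into each of the six conditions annihilates every summand carrying a factor of $T^{\nabla}$, $\nabla g$, or $d^{\nabla}g$.

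Second, I would observe that what remains in each condition is a sum of terms built solely from the covariant derivatives $\nabla J$, $\nabla J^*$, and $\nabla J^2$. Since $(M,J,g)$ is locally metallic, $\nabla J = 0$ holds by definition. From $\flat_g \circ J = J^* \circ \flat_g$ together with $\nabla g = 0$ (so that $\flat_g$ and $\sharp_g$ are $\nabla$-parallel as operators) one gets $\nabla J^* = \flat_g \circ (\nabla J) \circ \sharp_g = 0$, and from the metallic identity $J^2 = pJ + qI$ one gets $\nabla J^2 = p\,\nabla J = 0$. Hence every surviving term vanishes identically and all six conditions are satisfied, which is the desired $\nabla$-integrability of $\hat{J}_c$. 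As a consistency check, the first condition collapses to $N_J = 0$, in agreement with the earlier formula $N_J(X,Y) = (\nabla_{JX}J)Y - (\nabla_{JY}J)X + J(\nabla_Y J)X - J(\nabla_X J)Y + \Phi(T^{\nabla})(X,Y)$, which vanishes once $\nabla J = 0$ and $T^{\nabla} = 0$.

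The argument is essentially bookkeeping, so the only point requiring care---and the main, if modest, obstacle---is verifying that after setting $T^{\nabla} = \nabla g = d^{\nabla}g = 0$ each of the six conditions really is a sum in which every summand carries at least one factor of $\nabla J$, $\nabla J^*$, or $\nabla J^2$, with no residual algebraic term free of these derivatives. This I would confirm by inspecting the six conditions one block at a time, producing a simplified list parallel to the one displayed in the proof for $\hat{J}_p$, and then noting that $\nabla J = 0$ kills it entirely.
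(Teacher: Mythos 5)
Your proposal is correct and follows essentially the same route as the paper: specialize the six conditions of the preceding proposition to the Levi-Civita connection (using $T^{\nabla}=0$, $\nabla g=0$, hence $d^{\nabla}g=0$), observe that every surviving term involves $\nabla J$, $\nabla J^*$, or $\nabla J^2$, and conclude from $\nabla J=0$. Your explicit derivations of $\nabla J^*=0$ and $\nabla J^2=0$, and of $N_J=0$ via the displayed formula with $\Phi(T^{\nabla})=0$, merely spell out steps the paper leaves implicit.
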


\begin{proof}
From the previous proposition, we have that the generalized complex structure $\hat{J}_c$ is $\nabla$-integrable if and only if the following conditions are satisfied:\\

$ N_J=0$\\

$ (\nabla_{Y}J)X- (\nabla_{X}J)Y=0$\\

$(\nabla_ X J^*)J^*-(\nabla_{JX}J^*)=0$\\

$(\nabla_{(I+J^2)X}J^*)g(Y)- (\nabla_{(I+J^2)Y}J^*)g(X)=0$\\

$(\nabla_{(I+J^2)X}J^2)Y- (\nabla_{(I+J^2)Y}J^2)X=0$\\

$-(\nabla_{JX}J^2)Y+(\nabla_{(I+J^2)Y}J)X-(\nabla_X J)Y+J(\nabla_X J^2)Y-J^2(\nabla_X J)Y=0$,\\

\noindent for all $X,Y \in C^{\infty}(TM).$ In particular, if $\nabla J=0 $, then $\hat{J}_c$ is $\nabla$-integrable.
\end{proof}

\begin{definition}
A generalized complex structure $\hat{J}$ on $M$ is called \textit{calibrated} if it is $(\cdot,\cdot)$-invariant and the bilinear symmetric form defined by $(\cdot,\hat{J}\cdot)$ on $TM$ is non-degenerate and positive definite, where $$(X+\alpha,Y+\beta):=-\frac{1}{2}(\alpha(Y)-\beta(X))$$ is the natural symplectic structure on $TM\oplus T^*M$.
\end{definition}

\begin{remark}
The generalized complex structure $\hat{J}_c$ is calibrated with respect to $(\cdot,\cdot)$.
\end{remark}

\begin{proposition}
Let $(\hat{J}_c:=\begin{pmatrix}
               J & -(I+J^2)\sharp_g \\
               \flat_g & -J^* \\
         \end{pmatrix}, \hat{g})$ be the generalized complex structure induced by the metallic Riemannian structure $(J,g)$ on $M$ with $\hat{g}$ the Riemannian metric defined by (\ref{e}). Then:
         $$\hat{D}\hat{J}_c=0 \ \textit{if and only if} \ DJ=0 \ \textit{and} \ Dg=0.$$

\end{proposition}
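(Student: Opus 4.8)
The plan is to mirror the computation already used for the generalized product structure $\hat{J}_p$, noting that $\hat{J}_c$ and $\hat{J}_p$ share the same first column $\begin{pmatrix} J \\ \flat_g \end{pmatrix}$ and differ only in the second. First I would compute the covariant derivative of the endomorphism $\hat{J}_c$ under $\hat{D}$ through the Leibniz rule $(\hat{D}_Y\hat{J}_c)\sigma=\hat{D}_Y(\hat{J}_c\sigma)-\hat{J}_c(\hat{D}_Y\sigma)$, evaluating it separately on a vector field $X$ and on a $1$-form $\alpha$ and using $\hat{D}_Y(Z+\beta)=D_YZ+D_Y\beta$ together with the block form of $\hat{J}_c$.

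Because the first column of $\hat{J}_c$ coincides with that of $\hat{J}_p$, the vector-field evaluation is identical to the product case and yields $(\hat{D}_Y\hat{J}_c)X=(D_YJ)X+(D_Yg)X$, where, following the paper's convention of writing $g$ for $\flat_g$, the first summand is the $TM$-component $(D_YJ)X$ and the second is the $T^*M$-component $(D_Y\flat_g)X$. For the $1$-form I would substitute $I+J^2=pJ+(q+1)I$, so that $-(I+J^2)\sharp_g=-pJ\sharp_g-(q+1)\sharp_g$, and the analogous Leibniz expansion gives $(\hat{D}_Y\hat{J}_c)\alpha=-p(D_Y(J\sharp_g))\alpha-(q+1)(D_Y\sharp_g)\alpha-(D_YJ^*)\alpha$.

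The conclusion then follows by reading off components. Since $(\hat{D}_Y\hat{J}_c)X$ has $TM$-part $(D_YJ)X$ and $T^*M$-part $(D_Y\flat_g)X$, and these live in complementary summands, the vanishing of $(\hat{D}_Y\hat{J}_c)X$ for all $X,Y$ forces simultaneously $DJ=0$ and $D\flat_g=0$, i.e. $Dg=0$; this establishes necessity of both conditions at once. For sufficiency I would observe that $Dg=0$ gives $D\sharp_g=0$, and combined with $DJ=0$ this yields $D(J\sharp_g)=0$ and, via $J^*=\flat_g\circ J\circ\sharp_g$, also $DJ^*=0$, so every term of $(\hat{D}_Y\hat{J}_c)\alpha$ vanishes. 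Hence $\hat{D}\hat{J}_c=0$ if and only if $DJ=0$ and $Dg=0$.

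I do not anticipate a real obstacle, as the argument is routine and largely inherited from the treatment of $\hat{J}_p$. The points needing care are purely bookkeeping: keeping the $TM$- and $T^*M$-components separate so that the two defining equations decouple, correctly using $I+J^2=pJ+(q+1)I$ (the only numerical change from the product case, where $I-J^2=-pJ-(q-1)I$ occurred), and confirming the implications $Dg=0\Rightarrow D\sharp_g=0$ and $(DJ=0,\,Dg=0)\Rightarrow DJ^*=0$ that make the $1$-form identity collapse.
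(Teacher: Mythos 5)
Your proposal is correct and follows essentially the same route as the paper: the paper's proof consists precisely of the two formulas you derive, $(\hat{D}_Y\hat{J}_c)X=(D_YJ)X+(D_Yg)X$ and $(\hat{D}_Y\hat{J}_c)\alpha=-p(D_Y(J\sharp_g))\alpha-(q+1)(D_Y\sharp_g)\alpha-(D_YJ^*)\alpha$, from which the equivalence is read off. Your additional bookkeeping (splitting into complementary $TM$- and $T^*M$-components for necessity, and the implications $Dg=0\Rightarrow D\sharp_g=0$ and $DJ=0,\,Dg=0\Rightarrow DJ^*=0$ for sufficiency) simply makes explicit what the paper compresses into ``therefore the statement.''
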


\begin{proof}
Remark that $(\hat{D}_Y\hat{J}_c)X=(D_YJ)X+(D_Yg)X$, for any $X, Y \in C^{\infty} (TM)$ and $(\hat{D}_Y\hat{J}_c)\alpha=-p(D_Y(J {\sharp}_g))\alpha-(q+1)(D_Y {\sharp}_g )\alpha- (D_YJ^*)\alpha$, for any $Y \in C^{\infty}(TM)$ and $\alpha \in C^{\infty}(T^*M)$, therefore the statement.
\end{proof}

\begin{definition}
A pair $(\hat{J}_c,\hat{J}_p)$ of a generalized complex structure and a generalized product structure is called \textit{generalized complex product structure} if $\hat{J}_c\hat{J}_p=-\hat{J}_p\hat{J}_c$.
\end{definition}

\begin{remark}
If $(J,g)$ is a metallic Riemannian structure on $M$, then $(\hat{J}_c,\hat{J}_p)$, for $\hat{J}_c:=\begin{pmatrix}
               J & -(I+J^2)\sharp_g \\
               \flat_g & -J^* \\
         \end{pmatrix}$ and $\hat{J}_p:=\begin{pmatrix}
               J & (I-J^2)\sharp_g \\
               \flat_g & -J^* \\
         \end{pmatrix}$, is a generalized complex product structure.
\end{remark}

\section{Metallic structures on tangent and cotangent bundles}

\subsection{Metallic structure on the tangent bundle}

Let $(M,J,g)$ be a metallic Riemannian manifold and let $\nabla$ be a linear connection on $M$. $\nabla$ defines the decomposition into the horizontal and vertical subbundles of $T(TM)$:
$$ T(TM)=T^H(TM)\oplus T^V(TM).$$

Let $\pi :TM \rightarrow M$ be the canonical projection and ${\pi}_*:T(TM) \rightarrow TM$ be the tangent map of $\pi$. If $a \in TM$ and $A \in T_a(TM)$, then ${\pi}_*(A) \in T_{\pi (a)}M$ and we denote by ${\chi}_a$ the standard identification between $T_{\pi (a)}M$ and its tangent space $ T_a(T_{\pi (a)}M)$.

Let ${\Psi}^{\nabla} :TM \oplus T^{*}M \rightarrow T(TM)$ be the bundle morphism defined by:
$${\Psi}^{\nabla}(X+\alpha):=X^H_a+{\chi}_a({\sharp}_g \alpha),$$
where $a \in TM$ and $X^H_a$ is the horizontal lifting of $X\in T_{\pi (a)}M$.

Let $\left\{ x^{1},...,x^{n}\right\} $ be local coordinates on $M$, let $\left\{ {\tilde{x}}^{1},..., {\tilde{x}}^{n},y^1,...,y^n\right\} $ be respectively the corresponding local coordinates on $TM$ and let $\{X_1,...,X_n, \dfrac{\partial }{\partial
{y^{1}}},.., \dfrac{\partial }{\partial
{y^{n}}}\}$ be a local frame on $T(TM)$, where $X_i=\dfrac{\partial }{\partial
{\tilde{x}}^{i}}$. We have:
$$ X_i^H=X_i-y^k{\Gamma}^l_{ik}{\dfrac{\partial }{\partial
y^{l}}}$$
$$ X_i^V=y^k{\Gamma}^l_{ik}{\dfrac{\partial }{\partial
y^{l}}}$$
$$\left( \dfrac{\partial }{\partial
{y^{i}}}\right)^H=0$$
$$\left ( \dfrac{\partial }{\partial
{y^{i}}}\right)^V= \dfrac{\partial }{\partial
{y^{i}}}
$$
where $i,k,l$ run from $1$ to $n$ and $\Gamma^k_{il}$ are the Christoffel's symbols of $\nabla$.

Let ${\Psi}^{\nabla} :TM \oplus T^{*}M \rightarrow T(TM)$ be the bundle morphism defined before (which is an isomorphism on the fibres). In local coordinates, we have the following expressions:
$${\Psi}^{\nabla}\left(\dfrac{\partial }{\partial
{x^{i}}}\right)=X_i^H$$
$${\Psi}^{\nabla}\left({dx^j}\right)=g^{jk} \dfrac{\partial}{\partial y^k}.$$

Let $({\hat J}_m,\hat g)$ be the generalized metallic structure defined in the previous section. The isomorphism ${\Psi}^{\nabla}$ allows us to construct a natural metallic structure $\bar J_m$ and a natural Riemannian metric $\bar g$ on $TM$ in the following way.

We define $\bar J_m:T(TM) \rightarrow T(TM)$ by
$${\bar J}_m:= ({\Psi}^{\nabla} )\circ{\hat J}_m \circ ({\Psi}^{\nabla} )^{-1}$$
and the Riemannian metric $\bar g$ on $TM$ by
$${\bar g}:= (({\Psi}^{\nabla} )^{-1})^{*}(\hat g). $$

\begin{proposition} $(TM,{\bar J}_m,\bar g)$ is a metallic Riemannian manifold.
\end{proposition}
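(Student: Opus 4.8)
The plan is to treat this as a pure transport-of-structure statement: since $\Psi^\nabla$ is fibrewise a linear isomorphism from $TM\oplus T^*M$ onto $T(TM)$, every pointwise algebraic feature of the pair $(\hat{J}_m,\hat{g})$ is carried verbatim onto $(\bar{J}_m,\bar{g})$. Concretely, $\bar{J}_m$ is the conjugate of $\hat{J}_m$ and $\bar{g}$ is the pullback of $\hat{g}$, so the three things to check --- the metallic relation, positive-definiteness, and $\bar{g}$-symmetry of $\bar{J}_m$ --- each reduce to the corresponding property already recorded for $(\hat{J}_m,\hat{g})$ in the previous section.

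First I would verify the metallic identity. Writing $\bar{J}_m^2=(\Psi^\nabla\circ\hat{J}_m\circ(\Psi^\nabla)^{-1})\circ(\Psi^\nabla\circ\hat{J}_m\circ(\Psi^\nabla)^{-1})=\Psi^\nabla\circ\hat{J}_m^2\circ(\Psi^\nabla)^{-1}$, the inner factors $(\Psi^\nabla)^{-1}\circ\Psi^\nabla$ cancel, and substituting $\hat{J}_m^2=p\hat{J}_m+qI$ gives $\bar{J}_m^2=p\bar{J}_m+qI$ at once. Thus $\bar{J}_m$ is a genuine metallic structure on $T(TM)$. Next, for the metric: by definition $\bar{g}(A,B)=\hat{g}((\Psi^\nabla)^{-1}A,(\Psi^\nabla)^{-1}B)$ for $A,B\in T_a(TM)$, and because $(\Psi^\nabla)^{-1}$ is a fibre isomorphism while $\hat{g}$ is symmetric and positive definite, $\bar{g}$ is again symmetric and positive definite, hence a Riemannian metric.

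The compatibility $\bar{g}(\bar{J}_m A,B)=\bar{g}(A,\bar{J}_m B)$ is where the $\hat{g}$-symmetry of $\hat{J}_m$ enters. I would use the intertwining relation $(\Psi^\nabla)^{-1}\circ\bar{J}_m=\hat{J}_m\circ(\Psi^\nabla)^{-1}$, immediate from the definition of $\bar{J}_m$, to compute $\bar{g}(\bar{J}_m A,B)=\hat{g}(\hat{J}_m(\Psi^\nabla)^{-1}A,(\Psi^\nabla)^{-1}B)$; applying the already-established $\hat{g}$-symmetry of $\hat{J}_m$ and then reading the intertwining relation backwards yields $\hat{g}((\Psi^\nabla)^{-1}A,\hat{J}_m(\Psi^\nabla)^{-1}B)=\bar{g}(A,\bar{J}_m B)$. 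Together these three points show that $(\bar{J}_m,\bar{g})$ is a metallic Riemannian structure, so $(TM,\bar{J}_m,\bar{g})$ is a metallic Riemannian manifold.

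I do not expect a genuine obstacle here: the construction is engineered precisely so that conjugation and pullback do all the work. The only point deserving a word is well-definedness and smoothness --- namely that $\bar{J}_m$ is a smooth $(1,1)$-tensor field and $\bar{g}$ a smooth metric on the whole of $TM$ --- but this is automatic, since $\Psi^\nabla$ is a smooth bundle morphism that is an isomorphism on every fibre, whence $(\Psi^\nabla)^{-1}$ is smooth as well. Hence the entire content of the proof is the transport argument above.
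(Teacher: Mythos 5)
Your proof is correct and follows essentially the same route as the paper, whose proof is simply the remark that the metallic identity and the $\bar g$-symmetry follow from the definitions of $\bar J_m$ and $\bar g$ as conjugation by and pullback along $\Psi^\nabla$. You merely spell out the conjugation, intertwining, and positive-definiteness details that the paper leaves implicit.
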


\begin{proof}
From the definition it follows that ${\bar J}_m^2=p{\bar J}_m+qI$ and ${\bar g}({\bar J}_mX,Y)={\bar g}(X,{\bar J}_mY)$, for any $X,Y \in C^{\infty}(T(TM))$.
\end{proof}

In local coordinates, we have the following expressions for ${\bar J}_m$ and $\bar g$:
$$
\left \{
\begin{array}{l}
\vspace{0.2cm}
{\bar J}_m \left(  X_i^H\right)=J^k_iX_k^H\\
{\bar J}_m \left({\dfrac{\partial }{\partial
y^j}}\right)=g_{ji}J_k^i g^{kl}{\dfrac{\partial }{\partial
y^{l}}}=J^k_j{\dfrac{\partial }{\partial
y^{k}}}
\end{array}\label{190}
\right.
$$
$$
\left \{
\begin{array}{l}
\vspace{0.2cm}
{\bar g} \left(  X_i^H,X^H_j\right)=g_{ij}\\
\vspace{0.2cm}
{\bar g} \left(  X_i^H,{\dfrac{\partial }{\partial
y^{j}}}\right )=0\\
{\bar g} \left({\dfrac{\partial }{\partial
y^{i}}},{\dfrac{\partial }{\partial
y^{j}}}\right)=g_{ij}.
\end{array}\label{200}
\right.
$$

Moreover:
$$
{\bar J}_m \left(  X_i\right)=J^k_i X_k-y^l(J^k_i {\Gamma}^s_{kl}- J^s_r{\Gamma}^r_{il} ){\dfrac{\partial }{\partial
y^{s}}}\label{190}
$$
$$
\left \{
\begin{array}{l}
\vspace{0.2cm}
{\bar g} \left(  X_i,X_j\right)=g_{ij}+y^ky^h{\Gamma}^l_{ik}{\Gamma}^s_{jh}g_{hk}\\
\vspace{0.2cm}
{\bar g} \left(  X_i,{\dfrac{\partial }{\partial
y^{j}}}\right )=y^k{\Gamma}^l_{ik}g_{lj}.\\
\end{array}\label{200}
\right.
$$

Computing the Nijenhuis tensor of ${\bar J}_m$, we get the following:
$$
\begin{array}{l}
\vspace{0.2cm}
N_{{\bar J}_m}\left({\dfrac{\partial }{\partial y^{i}}},{\dfrac{\partial }{\partial y^{j}}}\right)=0 \\
\end{array}\label{210}
$$
$$
\begin{array}{l}
\vspace{0.2cm}
N_{{\bar J}_m} \left( X^H_{i},{\dfrac{\partial }{\partial y^{j}}}\right)={\left( \left( {\nabla}_{JX_{i}} J \right) X_{j}-J \left( {\nabla}_{X_{i}} J \right){ X_{j}}\right)}^k {\dfrac{\partial }{\partial y^{k}}}\\
\end{array} \label{220}
$$
$$
\begin{array}{l}
\vspace{0.2cm}
N_{{\bar J}_m}\left(  X_i^H, X_j^H \right)=(N_J\left( {X_{i}},X_j\right))^kX_k^H +\\
- y^s \left(J^k_iJ^h_jR^r_{khs} -J_l^rJ^k_iR^l_{kjs}-J^h_jJ^r_lR^l_{ihs}+pJ^r_lR^l_{ijs}+qR^r_{ijs}\right) {{\dfrac{\partial }{\partial y^{r}}}}. \\
\end{array}\label{230}
$$

Therefore we can state the following.

\begin{proposition} Let $(M,J,g)$ be a flat locally metallic Riemannian manifold. If $\nabla$ is the Levi-Civita connection of $g$, then $({\bar J}_m,\bar g)$ is an integrable metallic Riemannian structure on $TM$.
\end{proposition}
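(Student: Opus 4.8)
The plan is to lean on the preceding proposition, which already establishes that $(\bar J_m, \bar g)$ is a metallic Riemannian structure on $TM$; hence the only thing left to verify is integrability, namely that the Nijenhuis tensor $N_{\bar J_m}$ vanishes identically. Since $\{X_i^H, \partial/\partial y^j\}$ is a local frame for $T(TM)$, it suffices to check that $N_{\bar J_m}$ vanishes on each of the three types of pairs of frame fields, and for all three the explicit expressions have already been recorded just above the statement.

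First I would dispose of the two easy components. The purely vertical term $N_{\bar J_m}(\partial/\partial y^i, \partial/\partial y^j)$ is zero unconditionally, so nothing needs to be imposed there. For the mixed horizontal--vertical term, the computed formula equals $\big((\nabla_{JX_i}J)X_j - J(\nabla_{X_i}J)X_j\big)^k\,\partial/\partial y^k$; invoking the hypothesis that $(M,J,g)$ is locally metallic, which by definition means $\nabla J = 0$, both covariant derivatives of $J$ vanish and this term disappears.

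The remaining horizontal--horizontal component is where both hypotheses come into play. Its formula splits into a horizontal part carrying $N_J(X_i, X_j)$ and a vertical part built from a contraction of the curvature tensor $R$. For the horizontal part, I would use that $\nabla$ is the Levi-Civita connection, so its torsion vanishes and the earlier identity relating $N_J$ to $\nabla J$ reduces to $N_J = 0$ once $\nabla J = 0$; thus the horizontal contribution drops out. For the vertical part, the flatness hypothesis $R = 0$ annihilates every curvature term, so it vanishes as well.

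Putting these together, $N_{\bar J_m} = 0$ on all frame pairs, hence everywhere by tensoriality, so $\bar J_m$ is integrable; combined with the metallic Riemannian property from the earlier proposition, $(\bar J_m, \bar g)$ is an integrable metallic Riemannian structure on $TM$. I do not expect a genuine obstacle here, since the three Nijenhuis formulas have already been derived; the only point requiring a moment of care is the passage $\nabla J = 0 \Rightarrow N_J = 0$, which relies on the torsion-freeness of the Levi-Civita connection to kill the $\Phi(T^{\nabla})$ term in the earlier identity for $N_J$.
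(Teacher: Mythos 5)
Your proposal is correct and follows essentially the same route as the paper: the paper states this proposition as an immediate consequence of the three displayed components of $N_{\bar J_m}$, which vanish exactly as you argue (the mixed term by $\nabla J=0$, the curvature terms by flatness, and $N_J=0$ via the earlier identity expressing $N_J$ in terms of $\nabla J$ and $\Phi(T^{\nabla})$ for the torsion-free Levi-Civita connection). Your write-up simply makes explicit the details the paper leaves implicit after ``Therefore we can state the following.''
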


\subsection{Metallic structure on the cotangent bundle}

Let $(M,J,g)$ be a metallic Riemannian manifold and let $\nabla$ be a linear connection on $M$. $\nabla$ defines the decomposition into the horizontal and vertical subbundles of $T(T^*M)$:
$$ T(T^*M)=T^H(T^*M)\oplus T^V(T^*M).$$

Let $\pi :T^*M \rightarrow M$ be the canonical projection and ${\pi}_*:T(T^*M) \rightarrow TM$ be the tangent map of $\pi$. If $a \in T^*M$ and $A \in T_a(T^*M)$, then ${\pi}_*(A) \in T_{\pi (a)}M$ and we denote by ${\chi}_a$ the standard identification between $T^*_{\pi (a)}M$ and its tangent space $ T_a(T^*_{\pi (a)}M)$.

Let ${\Phi}^{\nabla} :TM \oplus T^{*}M \rightarrow T(T^*M)$ be the bundle morphism defined by \cite{an}:
$${\Phi}^{\nabla}(X+\alpha):=X^H_a+{\chi}_a(\alpha),$$
where $a \in T^*M$ and $X^H_a$ is the horizontal lifting of $X\in T_{\pi (a)}M$.

Let $\left\{ x^{1},...,x^{n}\right\} $ be local coordinates on $M$, let $\left\{ {\tilde{x}}^{1},..., {\tilde{x}}^{n},y_1,...,y_n\right\} $ be respectively the corresponding local coordinates on $T^*M$ and let $\{X_1,...,X_n, \dfrac{\partial }{\partial
{y_{1}}},.., \dfrac{\partial }{\partial
{y_{n}}}\}$ be a local frame on $T(T^*M)$, where $X_i=\dfrac{\partial }{\partial
{\tilde{x}}^{i}}$. We have:
$$ X_i^H=X_i+y_k{\Gamma}^k_{il}{\dfrac{\partial }{\partial
y_{l}}}$$
$$ X_i^V=-y_k{\Gamma}^k_{il}{\dfrac{\partial }{\partial
y_{l}}}$$
$$\left( \dfrac{\partial }{\partial
{y_{i}}}\right)^H=0$$
$$\left ( \dfrac{\partial }{\partial
{y_{i}}}\right)^V= \dfrac{\partial }{\partial
{y_{i}}}
$$
where $i,k,l$ run from $1$ to $n$ and $\Gamma^k_{il}$ are the Christoffel's symbols of $\nabla$.

Let ${\Phi}^{\nabla} :TM \oplus T^{*}M \rightarrow T(T^*M)$ be the bundle morphism defined before (which is an isomorphism on the fibres). In local coordinates, we have the following expressions:
$${\Phi}^{\nabla}\left(\dfrac{\partial }{\partial
{x^{i}}}\right)=X_i^H$$
$${\Phi}^{\nabla}\left({dx^j}\right)=\dfrac{\partial }{\partial
{y_{j}}}.$$

Let $({\hat J}_m,\hat g)$ be the generalized metallic structure defined in the previous section. The isomorphism ${\Phi}^{\nabla}$ allows us to construct a natural metallic structure $\tilde J_m$ and a natural Riemannian metric $\tilde g$ on $T^*M$ in the following way.

We define $\tilde J_m:T(T^*M) \rightarrow T(T^*M)$ by
$${\tilde J}_m:= ({\Phi}^{\nabla} )\circ{\hat J}_m \circ ({\Phi}^{\nabla} )^{-1}$$
and the Riemannian metric $\tilde g$ on $T^*M$ by
$${\tilde g}:= (({\Phi}^{\nabla} )^{-1})^{*}(\hat g). $$

\begin{proposition}
$(T^*M,{\tilde J}_m,\tilde g)$ is a metallic Riemannian manifold.
\end{proposition}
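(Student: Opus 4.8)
The plan is to verify directly the two defining conditions of a metallic Riemannian manifold for $(T^*M,\tilde{J}_m,\tilde{g})$, exactly as was done above for $(TM,\bar{J}_m,\bar{g})$: namely that $\tilde{J}_m^2=p\tilde{J}_m+qI$ and that $\tilde{g}$ is $\tilde{J}_m$-symmetric. The entire argument is formal, resting on the fact that $\Phi^\nabla$ is a fibrewise linear isomorphism from $\pi^*(TM\oplus T^*M)$ onto $T(T^*M)$, so that conjugation by it transports the generalized metallic Riemannian structure $(\hat{J}_m,\hat{g})$ intact to $T^*M$.

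First I would record the algebraic facts on the base. Since $J^2=pJ+qI$ we also have $(J^*)^2=pJ^*+qI$, and because $\hat{J}_m$ is block-diagonal with diagonal blocks $J$ and $J^*$ it satisfies $\hat{J}_m^2=p\hat{J}_m+qI$. From the definition $\tilde{J}_m=\Phi^\nabla\circ\hat{J}_m\circ(\Phi^\nabla)^{-1}$, conjugation then gives
$$\tilde{J}_m^2=\Phi^\nabla\circ\hat{J}_m^2\circ(\Phi^\nabla)^{-1}=\Phi^\nabla\circ(p\hat{J}_m+qI)\circ(\Phi^\nabla)^{-1}=p\tilde{J}_m+qI,$$
so $\tilde{J}_m$ is a metallic structure on $T^*M$.

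Next I would handle the compatibility. Recall that $\hat{J}_m$ is $\hat{g}$-symmetric (remarked earlier) and that $\tilde{g}=((\Phi^\nabla)^{-1})^*\hat{g}$, i.e. $\tilde{g}(U,V)=\hat{g}((\Phi^\nabla)^{-1}U,(\Phi^\nabla)^{-1}V)$ for $U,V\in C^{\infty}(T(T^*M))$. Using the identity $(\Phi^\nabla)^{-1}\tilde{J}_m=\hat{J}_m(\Phi^\nabla)^{-1}$ together with the $\hat{g}$-symmetry of $\hat{J}_m$, I would compute
$$\tilde{g}(\tilde{J}_m U,V)=\hat{g}(\hat{J}_m(\Phi^\nabla)^{-1}U,(\Phi^\nabla)^{-1}V)=\hat{g}((\Phi^\nabla)^{-1}U,\hat{J}_m(\Phi^\nabla)^{-1}V)=\tilde{g}(U,\tilde{J}_m V),$$
which is precisely the desired symmetry.

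There is no genuine obstacle here: the content is merely that conjugation preserves the quadratic polynomial identity and that pulling back a compatible metric preserves compatibility. The only point requiring care is the bookkeeping for $\Phi^\nabla$, which is an isomorphism on fibres between $\pi^*(TM\oplus T^*M)$ and $T(T^*M)$; consequently all the identities above are pointwise (tensorial) and do not require $\Phi^\nabla$ to be bracket-compatible in any way. As a cross-check one could, mirroring the treatment of $\bar{J}_m$ and $\bar{g}$, express $\tilde{J}_m$ and $\tilde{g}$ in the local frame $\{X_i^H,\partial/\partial y_i\}$ via $\Phi^\nabla(\partial/\partial x^i)=X_i^H$ and $\Phi^\nabla(dx^j)=\partial/\partial y_j$, and read off their block forms to confirm both conditions explicitly.
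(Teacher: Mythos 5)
Your proposal is correct and is exactly the argument the paper intends by its one-line proof ``from the definition it follows that...'': conjugation by the fibrewise isomorphism $\Phi^\nabla$ transports the identity $\hat{J}_m^2=p\hat{J}_m+qI$, and the pullback metric inherits compatibility from the $\hat{g}$-symmetry of $\hat{J}_m$. You have merely written out the details the paper leaves implicit, so there is nothing to add.
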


\begin{proof}
From the definition it follows that ${\tilde J}_m^2=p{\tilde J}_m+qI$ and ${\tilde g}({\tilde J}_mX,Y)={\tilde g}(X,{\tilde J}_mY)$, for any $X,Y \in C^{\infty}(T(T^*M))$.
\end{proof}

In local coordinates, we have the following expressions for ${\tilde J}_m$ and $\tilde g$:
$$
\left \{
\begin{array}{l}
\vspace{0.2cm}
{\tilde J}_m \left(  X_i^H\right)=J^k_iX_k^H\\
{\tilde J}_m \left({\dfrac{\partial }{\partial
y_{j}}}\right)=J^j_k{\dfrac{\partial }{\partial
y_{k}}}
\end{array}\label{190}
\right.
$$
$$
\left \{
\begin{array}{l}
\vspace{0.2cm}
{\tilde g} \left(  X_i^H,X^H_j\right)=g_{ij}\\
\vspace{0.2cm}
{\tilde g} \left(  X_i^H,{\dfrac{\partial }{\partial
y_{j}}}\right )=0\\
{\tilde g} \left({\dfrac{\partial }{\partial
y_{i}}},{\dfrac{\partial }{\partial
y_{j}}}\right)=g^{ij}.
\end{array}\label{200}
\right.
$$

Moreover:
$$
{\tilde J}_m \left(  X_i\right)=J^k_i X_k+y_l(J^k_i {\Gamma}^l_{kr}- J^s_r{\Gamma}^l_{is} ){\dfrac{\partial }{\partial
y_{r}}}\label{190}
$$
$$
\left \{
\begin{array}{l}
\vspace{0.2cm}
{\tilde g} \left(  X_i,X_j\right)=g_{ij}+y_ky_h{\Gamma}^k_{il}{\Gamma}^h_{jr}g^{lr}\\
\vspace{0.2cm}
{\tilde g} \left(  X_i,{\dfrac{\partial }{\partial
y_{j}}}\right )=-y_k{\Gamma}^k_{il}g^{lj}.\\
\end{array}\label{200}
\right.
$$

Computing the Nijenhuis tensor of ${\tilde J}_m$, we get the following:
$$
\begin{array}{l}
\vspace{0.2cm}
N_{{\tilde J}_m}\left({\dfrac{\partial }{\partial y_{i}}},{\dfrac{\partial }{\partial y_{j}}}\right)=0 \\
\end{array}\label{210}
$$
$$
\begin{array}{l}
\vspace{0.2cm}
N_{{\tilde J}_m} \left( X^H_{i},{\dfrac{\partial }{\partial y_{j}}}\right)={\left( \left( {\nabla}_{JX_{i}} J \right) X_{k}-J\left( {\nabla}_{X_{i}} J \right){ X_{k}}\right)}^j {\dfrac{\partial }{\partial y_{k}}}\\
\end{array} \label{220}
$$
$$
\begin{array}{l}
\vspace{0.2cm}
N_{{\tilde J}_m}\left(  X_i^H, X_j^H \right)=(N_J\left( {X_{i}},X_j\right))^kX_k^H +\\
+ y_l \left(J^k_iJ^h_jR^l_{khs} -J_s^rJ^k_iR^l_{kjr}-J^r_sJ^k_jR^l_{ikr}+pJ^k_sR^l_{ijk}+qR^l_{ijs}\right) {{\dfrac{\partial }{\partial y_{s}}}}. \\
\end{array}\label{230}
$$

Therefore we can state the following.

\begin{proposition} Let $(M,J,g)$ be a flat locally metallic Riemannian manifold. If $\nabla$ is the Levi-Civita connection of $g$, then $({\tilde J}_m,\tilde g)$ is an integrable metallic Riemannian structure on $T^*M$.
\end{proposition}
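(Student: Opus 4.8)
The plan is to check the two defining conditions of an integrable metallic Riemannian structure separately. The algebraic part---that ${\tilde J}_m$ is metallic and $\tilde g$-symmetric, i.e. ${\tilde J}_m^2 = p{\tilde J}_m + qI$ and $\tilde g({\tilde J}_m X, Y) = \tilde g(X, {\tilde J}_m Y)$---is already recorded in the preceding proposition and carries over verbatim, so the whole task reduces to showing that the Nijenhuis tensor $N_{{\tilde J}_m}$ vanishes identically on $T^*M$.

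Because $N_{{\tilde J}_m}$ is $C^\infty$-bilinear and skew, it is enough to evaluate it on the trivialising local frame $\{X_i^H, \partial/\partial y_j\}$ of $T(T^*M)$, and the three resulting components have already been computed just above the statement. I would now feed the two hypotheses into them. The purely vertical component $N_{{\tilde J}_m}(\partial/\partial y_i, \partial/\partial y_j)$ is zero unconditionally. For the mixed component I would use that $(M,J,g)$ is locally metallic, i.e. $\nabla J = 0$ for the Levi-Civita connection $\nabla$: this annihilates both $(\nabla_{JX_i}J)X_k$ and $(\nabla_{X_i}J)X_k$, forcing $N_{{\tilde J}_m}(X_i^H, \partial/\partial y_j) = 0$.

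The horizontal--horizontal component I would handle by treating its two summands in turn. The coefficient of $\partial/\partial y_s$, assembled from the curvature components $R^l_{\,\cdot\,\cdot\,\cdot}$, vanishes term by term since $M$ is flat, so $R = 0$. For the surviving summand $(N_J(X_i,X_j))^k X_k^H$ I would invoke the identity recalled earlier in the paper, $N_J = (\nabla_{JX}J)Y - (\nabla_{JY}J)X + J(\nabla_Y J)X - J(\nabla_X J)Y + \Phi(T^{\nabla})(X,Y)$, and specialise it to the present setting: $\nabla J = 0$ kills the four covariant-derivative terms and $T^{\nabla} = 0$ (Levi-Civita is torsion-free) kills $\Phi(T^{\nabla})$, whence $N_J = 0$. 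All three components therefore vanish on the frame, and tensoriality upgrades this to $N_{{\tilde J}_m} \equiv 0$, giving integrability. The argument is essentially bookkeeping, since the explicit Nijenhuis formulas are already available; the only point needing a moment's attention is the vanishing of the first horizontal summand, which I would secure through the parallel-implies-integrable observation above rather than by re-expanding $N_J$ by hand, and there is no genuine obstacle beyond confirming that $R=0$ and $\nabla J=0$ annihilate every term.
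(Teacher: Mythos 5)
Your proposal is correct and follows essentially the same route as the paper: the paper states this proposition as an immediate consequence of the local-coordinate formulas for the three components of $N_{{\tilde J}_m}$ displayed just above it, with flatness killing the curvature terms, $\nabla J=0$ killing the mixed component, and $\nabla J=0$ together with the torsion-free property killing $N_J$ via the identity $N_J(X,Y)=(\nabla_{JX}J)Y-(\nabla_{JY}J)X+J(\nabla_Y J)X-J(\nabla_X J)Y+\Phi(T^{\nabla})(X,Y)$, exactly as you argue. Your additional remarks (tensoriality of $N_{{\tilde J}_m}$, and that the metallic Riemannian property comes from the preceding proposition) only make explicit what the paper leaves implicit.
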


\begin{remark}
The metallic structures ${\bar J}_m$ and ${\tilde J}_m$ on the tangent and cotangent bundles respectively, satisfy:
$${\bar J}_m \circ ({\Psi}^{\nabla}\circ ({\Phi}^{\nabla})^{-1})=({\Psi}^{\nabla}\circ ({\Phi}^{\nabla})^{-1})\circ {\tilde J}_m.$$
\end{remark}

\bigskip

\textit{Adara M. Blaga}

\textit{Department of Mathematics}

\textit{West University of Timi\c{s}oara}

\textit{Bld. V. P\^{a}rvan nr. 4, 300223, Timi\c{s}oara, Rom\^{a}nia}

\textit{adarablaga@yahoo.com}

\bigskip

\textit{Antonella Nannicini}

\textit{Department of Mathematics and Informatics "U. Dini"}

\textit{University of Florence}

\textit{Viale Morgagni, 67/a, 50134, Firenze, Italy}

\textit{antonella.nannicini@unifi.it}

\bigskip

\end{document}